\def\@abssec#1{\vspace{.05in}\footnotesize \parindent .2in
{\bf #1. }\ignorespaces}
\newtheorem{theorem}{Theorem}[section]
\newtheorem{lemma}[theorem]{Lemma}
\newtheorem{corollary}[theorem]{Corollary}
\def \Rm {\mathbb R}
\def \Cm {\mathbb C}
\def \Zm {\mathbb Z}
\newcommand{\eps}{\varepsilon}
\newcommand{\mF}{\mathcal F}
\newcommand{\mD}{\mathfrak D}
\newcommand{\mJ}{{\mathfrak J}}
\newcommand{\fa}{{\mathfrak a}}
\newcommand{\sE}{\mathsf E}
\newcommand{\mE}{\mathcal E}
\newcommand{\cout}[1]{}
\newcommand{\sgn}[1]{\,{\rm sign}(#1)}
 \renewcommand{\arraystretch}{1.5}
\title{Topological Equatorial Waves and Violation (or not) of the Bulk Edge Correspondence}
\author{Guillaume Bal \thanks{Departments of Statistics and Mathematics and Committee on Computational and Applied Mathematics, University of Chicago, Chicago, IL 60637; guillaumebal@uchicago.edu} \and Jiming Yu \thanks{Committee on Computational and Applied Mathematics, University of Chicago, Chicago, IL 60637; tommenix@uchicago.edu}}
\begin{document}
 
\maketitle

%\tableofcontents

%%\gb{First attempt at a complete result. Will need to complete bibliography and check all results. More numerics would also be great.} 

\begin{abstract}
  Atmospheric and oceanic mass transport near the equator display a well-studied asymmetry characterized by two modes moving eastward. This asymmetric edge transport is characteristic of interfaces separating two-dimensional topological insulators. The northern and southern hemispheres are insulating because of the presence of a Coriolis force parameter that vanishes only in the vicinity of the equator. A central tenet of topological insulators, the bulk edge correspondence, relates the quantized edge asymmetry to bulk properties of the insulating phases, which makes it independent of the Coriolis force profile near the equator. We show that for a natural differential Hamiltonian model of the atmospheric and oceanic transport, the bulk-edge correspondence does not always apply. In fact, an arbitrary quantized asymmetry can be obtained for specific, discontinuous,  such profiles. The results are based on a careful analysis of the spectral flow of the branches of absolutely continuous spectrum of a shallow-water Hamiltonian. Numerical simulations validate our theoretical findings.
\end{abstract}
 
%\begin{AMS}
%\end{AMS}

\renewcommand{\thefootnote}{\fnsymbol{footnote}}
\renewcommand{\thefootnote}{\arabic{footnote}}

\renewcommand{\arraystretch}{1.1}

%\begin{keywords}
%\end{keywords}

%\begin{AMS}
%\end{AMS}

%\pagestyle{myheadings}
%\thispagestyle{plain}

%%%%%%%%%%%%%%%%%%%%%%
%%% BEGINNING TEXT %%%
%%%%%%%%%%%%%%%%%%%%%%

%%%
\iffalse
\begin{itemize}
 \item Origin of problem
 \item Edge current observable
 \item Bulk-difference invariant and bulk-edge correspondence
 \item History of BEC
 \item Main result on $3\times3$ first-order system 
\end{itemize}
\fi %
%%%

%%%
\section{Introduction} \label{sec:intro}
%%%

Robust asymmetric transport along interfaces separating two-dimensional insulating bulks has been observed in many areas of applied science such as solid state physics, photonics, and geophysics \cite{avron1983homotopy,bernevig2013topological, delplace2017topological, prodan2016bulk, sato2017topological, volovik2009universe, witten2016three}.  That atmospheric (and oceanic) mass transport near the equator displays such an asymmetry and a robustness to perturbation was recently given a topological interpretation in \cite{delplace2017topological}; see also \cite{bal2022topological,bal2023topological,graf2021topology,jud2024classifying,quinn2024approximations,rossi2024topology,souslov2019topological,tauber2019bulk,tauber2023topology,zhu2023topology} and their references for subsequent analyses on this and related problems.

\paragraph{Topological shallow water model.}
The simplest model of atmospheric transport, and the only one we consider in this paper, is the following linearized system of shallow water equations
\begin{equation}\label{eq:H}
  i\partial_t \psi(t,x,y) = H\psi (t,x,y), \qquad H = \begin{pmatrix} 0 & D_x & D_y \\ D_x & 0 & if(y) \\ D_y & -if(y) & 0\end{pmatrix},\quad \psi =  \begin{pmatrix} \eta  \\ u \\ v \end{pmatrix},
\end{equation}
where $t$ is time, $x$ the spatial coordinate along the equator, $y$ the spatial coordinate across the equator in an appropriate linearization so that $(x,y)\in\Rm^2$ the Euclidean plane, $D_y=\frac 1i\partial_x$ and $D_y=\frac 1i\partial_y$ while $f(y)$ is a Coriolis force parameter. Mass transport is modeled by $\eta(t,x,y)$ the atmosphere height, $u(t,x,y)$ its horizontal velocity, and $v(t,x,y)$ its vertical velocity. See \cite{delplace2017topological} for a derivation of this model from Boussinesq primitive equations.

A simple solution to the above system of equations is $v(t,x,y)=0$ with $\eta(t,x,y)=u(t,x,y)=e^{-F(y)}\phi(t-x)$ for $F(y)=\int_0^y f(z)dz$ and $\phi(x)$ an arbitrary function. This solution corresponds to the Kelvin mode \cite{delplace2017topological,matsuno1966quasi} and does propagate eastward and dispersion-less with normalized (group) velocity $1$ along the equator.

The above operator $H$ is Hermitian and self-adjoint with an appropriate domain of definition in $L^2(\Rm^2;\Cm^3)$ \cite{bal2022topological,rossi2024topology}. The observed asymmetric transport along the equator may be explained by the spectral properties of $H$.

When the Coriolis force parameter $f$ is constant, the operator $H\equiv H_B$ may be diagonalized (fibered) as 
\begin{equation}\label{eq:hatHB}
  H_B = \mF^{-1} \hat H_B \mF,\qquad  (\xi,\zeta)\to \hat H_B(\xi,\zeta) = \begin{pmatrix} 0 & \xi& \zeta \\ \xi & 0 & if \\ \zeta & -if& 0\end{pmatrix},
\end{equation}
where $\mF=\mF_{(x,y)\to(\xi,\zeta)}$ represents Fourier transformation with $(\xi,\zeta)$ dual variables to $(x,y)$ with the convention that $\mF f(\xi,\zeta)=\int_{\Rm^2} f(x,y) e^{-i(x\xi+y\zeta)} dxdy$. When other conventions are used, the Coriolis force parameter $f$ may then appear as $-f$. The above Hamiltonian may be written as $\xi\gamma_1+\zeta\gamma_4-if\gamma_7$ for $\gamma_i$ an appropriate set of Gell-Mann matrices. Any set of such $3\times3$ generators of the algebra ${\mathfrak{su}}(2)$ may be used instead. See \cite{bal2022topological,delplace2017topological,zhu2023topology} for equivalent such conventions.

We then observe that $H$ admits three branches of absolutely continuous spectrum parametrized by
\[
  (\xi,\zeta) \mapsto  E_0(\xi,\zeta)=0,\qquad (\xi,\zeta) \mapsto  E_\pm(\xi,\zeta)=\pm \sqrt{\xi^2+\zeta^2+f^2}.
\]
When $f\not=0$, we thus observe  the presence of two spectral gaps in $(-|f|,0)$ and $(0,|f|)$. The Coriolis force parameter is positive in the northern hemisphere and negative in the southern hemisphere. A reasonable model is in fact given by $f(y)=\beta y$ in a $\beta-$plane model \cite{delplace2017topological,matsuno1966quasi}.

A large positive value $f_+$ of the Coriolis parameter in the northern hemisphere for $y\gtrsim1$ combined with a large negative value $f_-$ in the southern hemisphere $y\lesssim-1$ therefore leads to two insulating half-spaces joined along the equator for any excitation with frequency residing in the gaps $(-|f_+|\wedge |f_-|,0)$ or  $(0, |f_+|\wedge |f_-|)$. 

It turns out that we can associate topological invariants to these insulating bulks. When the Coriolis parameters in the two half-space bulks have different signs, this topological invariant takes a value equal to $2$; see  \cite{bal2022topological,delplace2017topological,tauber2019bulk}. 

An application of a general principle, the {\em bulk-edge correspondence}, then states that an edge transport asymmetry compensates for this bulk imbalance, resulting in the observed asymmetric transport along the equator captured by two well known eastward propagating modes, the Kelvin mode and the Yanai mode \cite{delplace2017topological,matsuno1966quasi}. The topological nature of the asymmetry ensures its robustness to perturbations; see however \cite{bal2022topological,quinn2024approximations} for restrictions to this statement. 

We refer to, e.g., \cite{bernevig2013topological,fukui2012bulk,RevModPhys.82.3045,hatsugai1993chern,jezequel2023mode,kane2013topological} and \cite{bal2022topological,bal2023topological,bourne2018chern,drouot2021microlocal,elbau2002equality,prodan2016bulk,quinn2024approximations,schulz2000simultaneous} for references in the physical and mathematical literatures on this ubiquitous phenomenon, which finds applications in many different contexts but still defies any simple derivation. 

\paragraph{Edge current observable.}

While many different invariants may be defined, the invariant most closely related to a physical observable is based on the following edge current observable \cite{bal2022topological,bal2023topological,bourne2018chern,drouot2021microlocal,elbau2002equality,prodan2016bulk,schulz2000simultaneous}
\begin{equation}\label{eq:sigmaI}
  \sigma_I= {\rm Tr}\, i [H,P] \varphi'(H).
\end{equation}
Here $P(x):\Rm\to[0,1]$ is a  smooth function equal to $P(x)=0$ for $x<x_0-\eta$ and equal to $P(x)=1$ for $x>x_0+\eta$ for some $x_0\in\Rm$ and $\eta>0$. The operator $i[H,P]$ may then intuitively be interpreted as a current operator for atmospheric mass moving (per unit time) from the left to the right of a vertical line $x=x_0$.

The density of states $\varphi'(H)$ is modeled by a function $\varphi'(h)\geq0$ integrating to $1$ and supported in a frequency window $[E_0,E_1]$ inside each gap of the bulk Hamiltonians. When $f(y)$ has unbounded range, as we will assume in this paper to slightly simplify derivations, then the support of $\varphi'(H)$ is in fact arbitrary. 

It is possible to show that $i [H,P] \varphi'(H)$ is trace-class and hence $\sigma_I$ is a well-defined real number \cite{bal2022topological,quinn2024approximations} with ${\rm Tr}$ denoting operator trace on $L^2(\Rm^2;\Cm^3)$.

For Hamiltonians that are invariant with respect to translations along the edge, the edge current can  be calculated by means of spectral flows of branches of absolutely continuous spectrum as follows.  Let $H$ be as defined as in \eqref{eq:H} and $\mF_{x\to\xi}$ the Fourier transform in the first variable only. Then, we have the partial diagonalization (fibration)
\begin{equation}\label{eq:hatH}
  H = \mF_{\xi\to x}^{-1} \hat H \mF_{x\to\xi} ,\qquad \Rm\ni \xi \to \hat H(\xi) =  \begin{pmatrix} 0 & \xi& D_y \\ \xi & 0 & if(y) \\ D_y& -if(y)& 0\end{pmatrix}.
\end{equation}
It is convenient to decompose $L^2(\Rm^2)$ as a direct sum of spaces $L^2_\xi(\Rm)$ of functions of the form $e^{ix\xi}u(y)$ and then realize that $\hat H(\xi)$ is a family (parametrized by $\xi$) of (unbounded) self-adjoint operators on $L^2_\xi(\Rm;\Cm^3)$ \cite{bal2022topological,rossi2024topology}. The operator $\hat H(\xi)$ then admits only discrete spectrum away from $E=0$ when the range of $f(y)$ is unbounded; see  \cite{bal2022topological,teschl2009mathematical} and Lemma \ref{lem:discrete} below. Since we are interested in a range of frequency inside the bulk spectral gaps and values of $f(y)$ for $|y|\gg1$ are irrelevant to describe modes confined close to the equator $y\approx 0$, we will assume that $f(y)$ has unbounded range and as a result from Lemma \ref{lem:discrete} obtain that $\hat H(\xi)$ admits countable discrete spectrum $E_n(\xi)$ for $n\in\Zm$ away from frequency $E=0$. 

Consider the branches $E_n(\xi)\geq0$. The branches $\xi\mapsto E_n(\xi)$ are real analytic away from $E=0$ since they are one-dimensional and $\xi\mapsto\hat H(\xi)$ is analytic \cite[Chapter VII.1.1]{Kato66}; see also Lemma \ref{lem:discrete}. We will obtain that as $|\xi|\to\infty$, then $E_n(\xi)$ either converges to a constant or diverges to $\pm\infty$.  Assuming this asymptotic behavior, we may then associate to each branch for which $E_n(\xi)\not=0$ a spectral flow for any $\Rm\ni\alpha>0$:
\begin{equation}\label{eq:SF}
   {\rm SF}(E_n;\alpha) = \frac12 \Big( \lim_{\xi\to\infty} \sgn{E_n(\xi)-\alpha} -  \lim_{\xi\to-\infty} \sgn{E_n(\xi)-\alpha}\Big).
\end{equation}
For branches defined on $(-\infty,\xi_0]$ such that $E_n(\xi_0)=0$, we then define
\begin{equation}\label{eq:SF2}
   {\rm SF}(E_n;\alpha) =  -\frac12 \Big(1+  \lim_{\xi\to-\infty} \sgn{E_n(\xi)-\alpha}\Big).
\end{equation}
For branches defined on $[\xi_0,\infty)$ such that $E_n(\xi_0)=0$, we similarly define
\begin{equation}\label{eq:SF3}
   {\rm SF}(E_n;\alpha) = \frac12 \Big(1+   \lim_{\xi\to+\infty} \sgn{E_n(\xi)-\alpha}\Big).
\end{equation}

%\gb{Good catch. I agree all formulas should have the same form. It's probably easier to keep the three flows presented separately. }

When $f'$ vanishes at most on a discrete set, then we will see that $\xi_0$ above has to equal $0$. The spectral flow of the $n$th branch then takes values in $\{-1,0,1\}$ depending on how the branch $E_n(\xi)$ crosses a given frequency level $\alpha$. See Figure \ref{fig:1} below for illustrations of spectral branches. Spectral flows for branches $E_n(\xi)\leq0$ are defined similarly.

Further assume that $\varphi'(h)$ is supported in an interval $[E_0,E_1]$ such that all above spectral flows are independent of $\alpha\in [E_0,E_1]$. Then the edge current observable \eqref{eq:sigmaI} is related to the spectral flow of $H$ as follows:
\begin{equation}\label{eq:edgeSF}
  2\pi \sigma_I = \sum_n {\rm SF}(E_n;\alpha).
\end{equation}
In such a sum, we assume that the number of branches crossing the level $\alpha$ is finite. We will obtain that this hypothesis holds for the branches of \eqref{eq:H} except possibly for a well-identified finite set. 
The derivation of \eqref{eq:edgeSF} may be found in \cite[Theorem 3.1]{quinn2024asymmetric}; see also \cite{bal2022topological,fukui2012bulk} for the computation of edge currents by spectral flows. 

This result shows that the asymmetric edge current is entirely described by the absolutely continuous spectrum of the Hamiltonian $H$.

\paragraph{Bulk-edge correspondence (BEC).}

The above computation by spectral flow is quite general. However, it requires detailed spectral information on the Hamiltonian $H$. A general principle, the bulk-edge correspondence, relates the quantized edge current, naturally associated to a finite number of propagating edge modes, to the topological properties of the insulating bulks. 

For macroscopic continuous Hamiltonians (as opposed to discrete Hamiltonians \cite{elbau2002equality,prodan2016bulk,schulz2000simultaneous} but also to continuous operators with microscopic periodic structure \cite{drouot2021microlocal,drouot2020edge}), a first difficulty arises in that topological phases may not be defined for each (north and south)  bulk independently \cite{B19b,bal2022topological,silveirinha2015chern}. Rather, it is generally possible to introduce {\em bulk-difference} invariants $I_{BD}\in \Zm$ as defined in \cite{bal2022topological}; see also \cite{rossi2024topology}. These invariants may be interpreted as the computation of a Chern number associated to a vector bundle on top of the unit sphere. Such invariants only depend on a topological phase difference between the two bulks and do not require absolute bulk topologies, which cannot be defined for such objects without appropriate regularization \cite{B19b,bal2022topological,rossi2024topology,silveirinha2015chern}.

The bulk-edge correspondence then takes the form
\begin{equation}\label{eq:BEC}
  2\pi \sigma_I = I_{BD}.
\end{equation}
This correspondence is important in practice because the right-hand-side is typically significantly simpler to compute than the left-hand-side or the spectral flows in \eqref{eq:edgeSF}. Such a correspondence was shown under simplifying assumptions on $f(y)$ for the shallow-water problem in \cite{bal2022topological,bal2023topological}; see also \cite{quinn2024approximations} for several numerical simulations. We further comment on these results at the end of section \ref{sec:main}.

While extremely robust and proved in a variety of contexts \cite{bal2022topological,drouot2021microlocal,elbau2002equality,prodan2016bulk,schulz2000simultaneous}, the derivation of \eqref{eq:BEC} requires some hypotheses on $H$ beyond the existence of $I_{BD}$. For continuous macroscopic operators of the form of interest in this paper, a natural notion is that of {\em ellipticity} as defined in \cite{bal2022topological,bal2023topological,quinn2024approximations}. It turns out that the shallow water problem {\em does not} satisfy such ellipticity conditions, which we will not be presenting in detail here. Relevant to our discussions is the fact that the  eigenvalues of the phase-space symbol of the Hamiltonian are given by
\[
  E_0(x,y,\xi,\zeta)=0,\quad E_\pm(x,y,\xi,\zeta) = \pm \sqrt{\xi^2+\zeta^2+f^2(y)}.
\]
While the branches $E_\pm$ satisfy appropriate ellipticity conditions in the sense that they grow at least linearly as $|(\xi,\zeta)|\to\infty$, the branch $E_0$ is clearly not elliptic in that sense. This fact is responsible for a number of obstructions to the bulk-edge correspondence, both in the Euclidean-plane setting \cite{bal2022topological,quinn2024approximations,rossi2024topology} and in the half-space setting for a large class of boundary conditions \cite{graf2021topology,jud2024classifying,tauber2019anomalous,tauber2023topology}.

\medskip 

It is possible to restore the validity of the BEC by perturbing the definition of the edge invariant in \eqref{eq:sigmaI}.  In \cite{rossi2024topology}, the dispersion relation $\xi\mapsto E(\xi)$ is modified by an appropriate change of variables that restores the BEC. This may equivalently seen as modifying the straight level sets $E(\xi)=\alpha$ by replacing $\alpha$ by $\alpha(\xi)$ that tends to $\infty$ as $|\xi|\to\infty$. The invariant is, however, no longer related to the physical observable $\sigma_I$. 

A general regularization procedure is proposed in \cite{bal2023topological,quinn2024approximations}. It is based on the fact that the flat band $E_0(\xi)$ fails to be elliptic but is topologically trivial (any reasonable associated invariant vanishes). In these references, the Hamiltonian is perturbed in such a way that the flat band $E_0=0$ is replaced by a band of the form $E_0=\mu\sqrt{1+\xi^2+\zeta^2+f^2}$ while the corresponding generalized eigenvector and projectors are not modified. This naturally defines a modified Hamiltonian $H_\mu$. For any $\mu\not=0$, the problem thus becomes elliptic and the general BEC theory of \cite{bal2022topological,bal2023topological,quinn2024approximations} applies so that \eqref{eq:BEC} holds. Since the flat band $E_0=0$ when $\mu=0$ is topologically trivial \cite{bal2022topological,graf2021topology,rossi2024topology}, the choice of sign $\mu>0$ or $\mu<0$ is irrelevant as far as \eqref{eq:BEC} is concerned. Indeed, the band topology only depends on the generalized eigenvectors and corresponding projectors, which are independent of $\mu$ by construction, and not on the spectral bands themselves \cite{bernevig2013topological}.

While such regularizations `solve' the issue of the BEC, they come with a number of caveats. Indeed, the branches of absolutely continuous spectrum are modified by the regularization $|\mu|\ll1$ for very large values of the wavenumbers $\xi$. At such values, the Hamiltonian model is likely to be inaccurate as high wavenumbers are typically absent in the system because of strong dissipation effects not captured by Hamiltonian dynamics. Moreover, theoretical derivations and numerical simulations in \cite{quinn2024approximations} show that $2\pi\sigma_I[H+V]$ is not stable for some compact perturbations $V$ when $\mu=0$, whereas the theory of \cite{bal2022topological,bal2023topological,quinn2024approximations} shows that $2\pi\sigma_I[H+V]=2\pi\sigma_I[H]$ for any compactly supported perturbation $V$ as soon as $V\not=0$. 

An alternative restoration of the bulk edge correspondence has recently been proposed in \cite{onuki2023bulk}. It is based on extending the $\beta-$plane model \eqref{eq:H} to a $4\times4$ system of equations. This model restores a bounded spectrum for which a standard bulk edge correspondence then applies.

\paragraph{Validity or not of the Bulk-edge correspondence.}

In this paper, we consider the un-regularized, un-modified, $3\times3$ Hamiltonian $H$ defined in \eqref{eq:H} and aim to compute its spectral flow for general profiles $f(y)$. When $f(y)=y$ or $f(y)= f_+ \chi_{y>y_0} + f_-\chi_{y<y_0}$ takes two values (for instance $f(y)=f_0\sgn{y}$), then the discrete spectrum of $\hat H(\xi)$ may be computed explicitly \cite{delplace2017topological,matsuno1966quasi}, \cite[Appendix]{bal2022topological}; see also Fig. \ref{fig:1}. For more general profiles $f(y)$, it seems difficult to derive explicit expressions for the diagonalization of $H$. However, we may obtain sufficient information to compute its spectral flow, which is sufficient in regard of \eqref{eq:edgeSF}. 

%\tb{Add pictures of spectral flows. }
\begin{figure}[htbp]
  \begin{center}
  \includegraphics[width=7cm]{m=2001,f=y.jpg}  \hspace{1cm} \includegraphics[width=7cm]{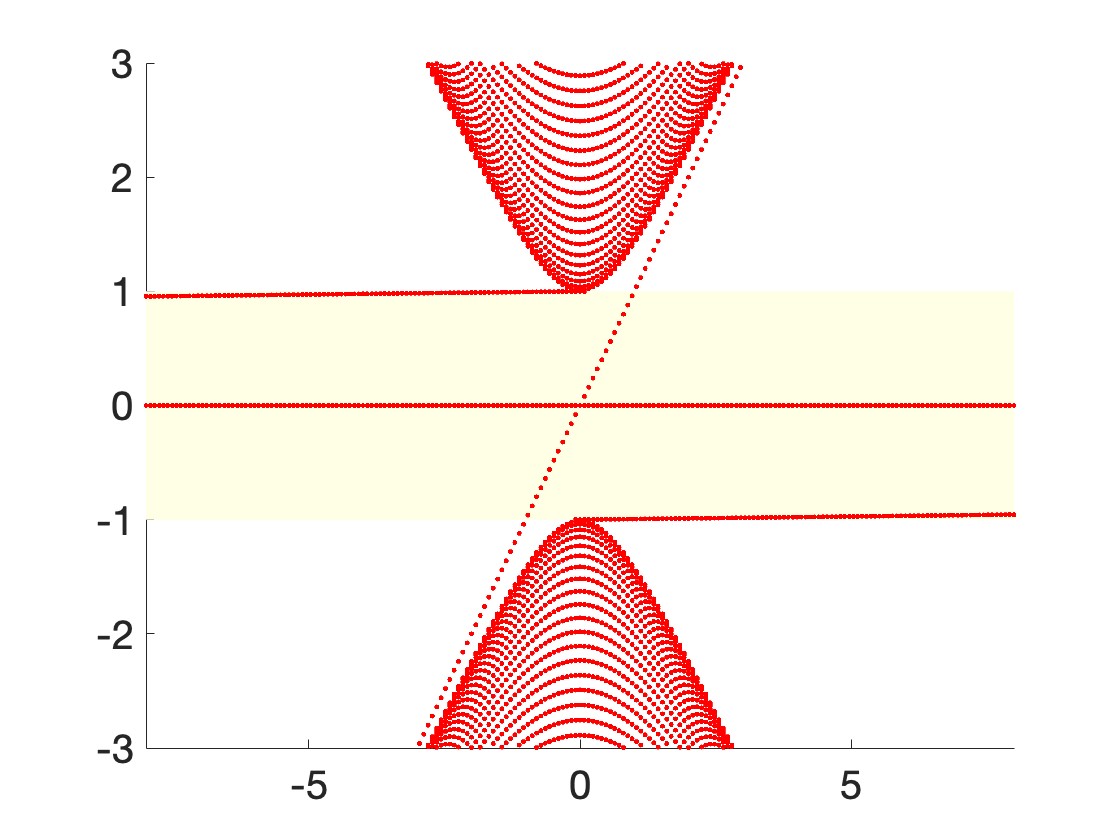}
  \end{center}
  \caption{Left: spectrum of $H$ when $f(y)=y$ with a spectral flow equal to $2$ as dictated by the BEC. Right: spectrum of $H$ when $f(y)=\sgn{y}$. For $\varphi'(H)$ supported in $(0,1)$, the spectral flow equals $1$: only the Kelvin mode contributes to the spectral flow while the Yanai mode is pushed away from the frequency range $(0,1)$ by the jump of $f(y)$. These branches were obtained by the numerical algorithm presented in section \ref{sec:num}.} 
  \label{fig:1}
\end{figure}

Our main objective in this paper is to show that for sufficiently smooth profiles $f(y)$, then the BEC holds while for $f(y)$ with an appropriate number of discontinuities, the violation of the BEC can be arbitrary. 

\paragraph{Outline.} The rest of the paper is structured as follows. Our main assumptions on the profile $f(y)$, our main results in Theorems \ref{thm:smooth} and \ref{thm:SF}, as well as a discussion on their validity and interpretation, are presented in section \ref{sec:main}. The proofs of the main results are postponed to section \ref{sec:results}. A algorithm to estimate the branches $E_n(\xi)$ and their spectral flow numerically is presented in section \ref{sec:num}. This provides a numerical validation of the theoretical analyses.

\section{Main results}\label{sec:main}
We recall that the main operator of interest is $H$ defined in \eqref{eq:H} and that it may be decomposed (fibered) as $H=\mF_{\xi\to x}^{-1}\hat H(\xi) \mF_{x\to\xi}$ with $\hat H(\xi)$ defined in \eqref{eq:hatH}. 

For the rest of the section, we assume that $f(y)$ is such that $f(y) \sgn{y}\to+\infty$ as $y\to\pm\infty$. We also assume the existence of a finite number of (distinct, increasing) points $y_j$ for $1\leq j\leq J$ such that $f(y)$ is uniformly of class $C^1$ away from the points $y_j$, where $[f](y_j)= f(y_j^+)-f(y_j^-)\not=0$ is defined and finite. Here $f(y^\pm)=\lim_{0<\eps\to0} f(y\pm\eps)$. To simplify derivations, we make the technical (and unnecessary) assumption that for some $\eta>0$, we have $f(y) = f(y_j^+) \chi_{y_j^+<y<y_j^++2\eta} + f(y_j^-) \chi_{y_j^--2\eta<y<y_j^-}$ for $y\in (y_j-2\eta,y_j+2\eta)$, i.e., the Coriolis parameter $f(y)$ is piecewise constant near jumps.

The first main result of this paper is that the BEC holds when $f(y)$ is continuous. 
\begin{theorem}[BEC]\label{thm:smooth}
  Assume further that $f'(y)$ is bounded and that $\varphi'(E)$ is supported away from $E=0$. Then the spectral flow of $H$ is given by \eqref{eq:edgeSF} with $2\pi\sigma_I=2$.
\end{theorem}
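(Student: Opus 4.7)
My plan is to compute the spectral flow by separating an explicit Kelvin branch from a residual $+1$ contribution that I track via homotopy to the $\beta$-plane profile where the full spectrum is classical.

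The Kelvin branch is immediate: setting $F(y)=\int_0^y f(z)\,dz$, the identity $D_y e^{-F}=if\,e^{-F}$ shows by direct substitution in \eqref{eq:hatH} that $\psi_K(y)=(e^{-F(y)},e^{-F(y)},0)^\top$ satisfies $\hat H(\xi)\psi_K=\xi\,\psi_K$ for every $\xi\in\Rm$. The growth assumption $f(y)\sgn{y}\to+\infty$ forces $F\to+\infty$ at both ends so that $\psi_K\in L^2(\Rm;\Cm^3)$, yielding a branch $E_K(\xi)=\xi$ with ${\rm SF}(E_K;\alpha)=+1$ for every $\alpha>0$. Self-adjointness then gives the orthogonal decomposition $\hat H(\xi)=\xi\, P_{\psi_K}\oplus\hat H'(\xi)$ on $\Cm\psi_K\oplus\psi_K^\perp$, so it suffices to prove that the spectral flow of $\hat H'(\xi)$ equals $+1$.

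For this residual flow I would interpolate $f_s(y)=(1-s)y+sf(y)$ for $s\in[0,1]$. Each $f_s$ is continuous with $\|f_s'\|_\infty\le 1+\|f'\|_\infty$ and satisfies $f_s(y)\sgn{y}\to+\infty$, so the Kelvin extraction above applies uniformly in $s$. At $s=0$ the Matsuno diagonalization expresses the spectrum of $\hat H$ through Hermite functions and a cubic dispersion relation, from which one reads off that, beyond Kelvin, exactly one branch -- the Yanai mode -- contributes $+1$ to the spectral flow across any $\alpha\in(0,1)$. If the total spectral flow in \eqref{eq:edgeSF} is invariant along this homotopy, the theorem follows.

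The main obstacle is precisely this invariance: a branch can only enter or leave the sum by limiting to $\alpha$ as $\xi\to\pm\infty$, or by passing through $E=0$ at finite $\xi$. The latter is excluded because $\varphi'$ is supported away from $E=0$. For the former, I would establish a uniform-in-$s$ decay estimate: the first two rows of $\hat H_s(\xi)\psi=E\psi$ yield $u=-D_yv/\xi+O(|\xi|^{-2})$ and $\eta=-if_sv/\xi+O(|\xi|^{-2})$ when $E$ stays in a compact subset of $(0,\infty)$, and substituting into the third row produces the identity $Ev=-f_s'(y)v/\xi+O(|\xi|^{-2})$. Since $\|f_s'\|_\infty$ is uniformly bounded, any normalized eigenfunction with such $E$ would have $\|\psi\|=O(|\xi|^{-1})\to 0$, a contradiction. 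Hence no branch of $\hat H_s$ approaches $\alpha>0$ as $|\xi|\to\infty$, the spectral flow is $s$-invariant, and combining the Kelvin and Yanai contributions with \eqref{eq:edgeSF} yields $2\pi\sigma_I=2$.
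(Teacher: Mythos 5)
Your architecture---peel off the Kelvin branch explicitly, then transport the residual flow by a homotopy $f_s=(1-s)y+sf$ to Matsuno's exactly solvable $\beta$-plane profile---is genuinely different from the paper's proof, which never deforms $f$: the paper shows directly that no branch other than Kelvin may cross $E=\xi$, that exactly one branch (Yanai) crosses $E=-\xi$ and does so with slope $>-1$, and that the factorization bounds $E|\xi\mp E|\le\|f'\|_\infty$ force every remaining branch in the sectors $0<E<|\xi|$ to decay to $0$, giving total flow $2$ for all levels at once. Your route could in principle work, but the step carrying the entire weight---the uniform-in-$s$ exclusion of eigenvalues near $\alpha$ for large $|\xi|$---is wrong as derived. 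After the exact elimination \eqref{eq:etau}, the third row gives precisely \eqref{eq:v}, $(-\partial_y^2+f_s^2+\mu f_s')v=(E^2-\xi^2)v$ with $\mu=\xi/E$; dividing by $\xi^2-E^2$, your ``identity'' $Ev=-f_s'v/\xi+O(|\xi|^{-2})$ silently discards the term $E(-\partial_y^2+f_s^2)v/(\xi^2-E^2)$, which is \emph{not} $O(|\xi|^{-2})$: the Sturm--Liouville spectral parameter is $E^2-\xi^2\sim-\xi^2$, so $\|v''\|$ is generically of order $\xi^2\|v\|$, and $f_s$ is unbounded so $\|f_s^2v\|$ is not controlled by $\|v\|$ either. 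Nor does ``$\|\psi\|=O(|\xi|^{-1})$'' follow from such a substitution: an eigenvalue equation yields no norm smallness by itself. The correct repair is a quadratic-form (positivity) argument, which is exactly what the paper uses: pairing \eqref{eq:v} with $v$ gives $\|v'\|^2+\|f_sv\|^2+\mu(f_s'v,v)=(E^2-\xi^2)\|v\|^2$, whence for normalized $v$ one gets $E(\xi^2-E^2)\le|\xi|\,\|f_s'\|_\infty$, or the sharper $E|\xi\mp E|\le\|f_s'\|_\infty$ via $L_\mu=\fa^*\fa+(1+\mu)f_s'=\fa\fa^*+(\mu-1)f_s'$. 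Since $\|f_s'\|_\infty\le 1+\|f'\|_\infty$, this is uniform in $s$ and excludes spectrum in any compact subset of $(0,\infty)$ once $|\xi|$ is large, apart from the Kelvin eigenvalue $E=\xi$, which crosses $\alpha$ only at finite $\xi$.

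With that estimate substituted, two further points still need attention before homotopy invariance can be invoked. First, you need continuity of $s\mapsto\hat H_s(\xi)$ in a norm-resolvent sense, jointly in $(s,\xi)$ on compact $\xi$-windows: the perturbation is multiplication by $s(f(y)-y)$, an unbounded operator, so its relative boundedness with respect to $\hat H_0(\xi)$ (plausible via the graph norm, since $\hat H_s^2$ controls $\|f_su\|$ and $\|u'\|$) must actually be proved, and your sketch never addresses it. Branches terminating at the essential spectrum $E=0$ at finite $\xi$---the situation handled by \eqref{eq:SF2}--\eqref{eq:SF3}---are indeed harmless here since $\alpha$ is bounded away from $0$, as you say. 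Second, your base case is stated only for $\alpha\in(0,1)$, while the theorem concerns every level away from $0$; for $f(y)=y$ the Yanai root of Matsuno's dispersion relation increases from $0$ to $+\infty$ as $\xi$ runs over $\Rm$, so the flow at $s=0$ equals $2$ for \emph{every} $\alpha>0$, and you should run the homotopy at each such level. In short: the strategy is viable and genuinely different from the paper's, but as written the crucial decay estimate is invalid and must be replaced by the form identity above, and the continuity-in-$s$ input to the homotopy invariance is missing.
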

The value of $2$ above is indeed that predicted by the invariant $I_{BD}$ in \eqref{eq:BEC}, whose computation is not entirely trivial; see \cite{bal2022topological,delplace2017topological,rossi2024topology,tauber2019bulk} for different methods to compute this bulk-difference invariant or difference of bulk invariants after appropriate regularization.

\medskip

To compute the spectral flow of $H$ in the presence of jumps of $f$, we first need to define the values
\[
   f_{jo}=\frac{f(y_j^+)-f(y_j^-)}2,\qquad f_{je}=\frac{f(y_j^+)+f(y_j^-)}2 ,
    %\qquad \sE= \frac{-f_o\xi}{\sqrt{f_e^2+\xi^2}},\qquad \kappa_\pm =  \sqrt{f_\pm^2+\xi^2-\sE^2} 
 \]
and the sets of half-jump values
\[
  \mE_L= \Big\{ f_{jo} \ ; \  1\leq j\leq J \ \& \  f_{jo}>0\Big\},\quad \mE_R= \Big\{-f_{jo}\  ; \  1\leq j\leq J \ \& \  f_{jo}<0\Big\}.
\]
For $E>0$, we also define $\mJ_L(E)$ as number of indices $1\leq j\leq J$ such that there is $E_j\in \mE_L$ such that $E_j>E$. We similarly define  $\mJ_R(E)$ as number of indices $1\leq j\leq J$ such that there is $E_j\in \mE_R$ such that $E_j > E$. We will observe that the spectrum of $H$ is symmetric under $(\xi,E)\to (-\xi,-E)$ and hence only treat the case $E>0$ since the spectral flow is invariant under $\alpha\to -\alpha$ in \eqref{eq:edgeSF}.

In other words, $\mJ_L(E)$ is the number of positive jumps with half-jump values above $E$. Similarly, $\mJ_R(E)$ is the number of negative jumps with half-jump (absolute) values above $E$.

We then have the result:
\begin{theorem}[BEC violation]\label{thm:SF}
  Let  $E>0$ with $E\not\in \mE_L\cup \mE_R$. Assume $\varphi$ so that  $E\in {\rm supp}\, \varphi'$ while ${\rm supp}\,\varphi' \cap (\mE_L\cup \mE_R\cup\{0\})=\emptyset$. Then we have
  \[
    2\pi \sigma_I  = 2 - \mJ_L(E)+ \mJ_R(E).
  \]
\end{theorem}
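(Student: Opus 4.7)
The plan is to compute $2\pi\sigma_I$ directly from the spectral flow formula \eqref{eq:edgeSF}, using Theorem \ref{thm:smooth} as a baseline and carefully accounting for the modifications introduced by each jump of $f(y)$. First I would invoke the particle-hole-like symmetry of $\hat H(\xi)$ under $(\xi,E)\mapsto(-\xi,-E)$ to restrict attention to positive energies, pick $\alpha\in{\rm supp}\,\varphi'$ near $E$, and note that the assumption $\alpha\notin\mE_L\cup\mE_R\cup\{0\}$ ensures $\alpha$ avoids all the asymptotic branch energies identified below; by \eqref{eq:edgeSF}, $\sum_n{\rm SF}(E_n;\alpha)$ is then well-defined and locally constant in $\alpha$ on ${\rm supp}\,\varphi'$.

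The core of the argument is an asymptotic analysis of the branches $E_n(\xi)$ as $|\xi|\to\infty$, with the goal of showing that each admits a well-defined limit lying in $\{\pm\infty\}\cup\{\pm f_{jo}: 1\leq j\leq J\}$. The technical assumption that $f$ is piecewise constant on $(y_j-2\eta,y_j+2\eta)$ is crucial here: it allows one to solve $\hat H(\xi)\psi=E\psi$ explicitly on each side of $y_j$ in terms of exponentials $e^{\pm\kappa_\pm(y-y_j)}$ with $\kappa_\pm^2=\xi^2+f(y_j^\pm)^2-E^2$, and to match them across $y_j$. Outside the $2\eta$-neighborhoods, strong confinement from $|f(y)|\to\infty$ forces exponential decay and selects the decaying combinations. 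In the large-$|\xi|$ limit, the matching condition reduces to an algebraic secular equation whose bounded roots are precisely $E=\pm f_{jo}$, with exactly one branch of $\hat H(\xi)$ localizing at $y_j$ and attaining each such limit at a definite side ($\xi\to-\infty$ or $\xi\to+\infty$ determined by the sign of $f_{jo}$ and the direction of exponential decay). All remaining branches either diverge to $\pm\infty$ or track the Kelvin and Yanai-like branches of a smoothed profile and hence reproduce the smooth-case spectral flow of Theorem \ref{thm:smooth}.

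Given this asymptotic picture, I count the modifications to the smooth baseline $2\pi\sigma_I=2$. Each positive jump with half-jump value $f_{jo}>E$ caps a branch at a value above $\alpha$ on one side, eliminating a crossing and contributing $-1$; each negative jump with $|f_{jo}|>E$ introduces a trapped-mode branch whose two limits straddle $\alpha$ and contributes $+1$; jumps with $|f_{jo}|<E$ produce branches whose bounded limits both lie below $\alpha$, so they either miss $\alpha$ or cross it in a compensating pair and contribute zero. Summing yields $2\pi\sigma_I=2-\mJ_L(E)+\mJ_R(E)$. The hardest step will be the asymptotic matching in the middle paragraph: proving that the bounded asymptotic values are exactly $\{\pm f_{jo}\}$ with the right multiplicities, that the limits are attained uniformly as $|\xi|\to\infty$, and that no branch oscillates or fails to admit a limit. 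Keeping track of on which side ($\xi\to+\infty$ versus $\xi\to-\infty$) each bounded limit is attained is essential for assigning the correct signs to the spectral flow contributions.
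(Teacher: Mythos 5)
Your overall route is the paper's own: reduce to $E>0$ by the $\Gamma$-symmetry, solve the piecewise-constant jump problem by exponential matching (this is exactly Lemma \ref{lem:purejump}, whose secular equation yields the dispersion relation $E=-\xi f_{jo}/\sqrt{f_{je}^2+\xi^2}$ and hence the limits $|f_{jo}|$ attained at $\xi\to-\infty$ when $f_{jo}>0$ and at $\xi\to+\infty$ when $f_{jo}<0$), cut off near each $y_j$ using the piecewise-constant $2\eta$-neighborhoods to get quasimodes with $O(e^{-C_\eta|\xi|})$ errors (Corollary \ref{cor:jumps}), and then count spectral flows against the smooth baseline of Theorem \ref{thm:smooth}. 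Your difference bookkeeping is also consistent with the paper's, even though the paper assigns the contributions differently: there, when $\mE_L\neq\emptyset$ the Yanai branch is itself the branch converging to the largest element of $\mE_L$, contributing $0$ rather than $+1$, with only the remaining positive-jump branches contributing $-1$ each; since branches are simple and cannot cross, the two accountings give the same total.

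The genuine gap is the step you explicitly defer: proving that \emph{every other} branch in the sectors $0<E(\xi)<|\xi|$ converges to $0$, so that the bounded nonzero limits are exactly the $J$ values $|f_{jo}|$, each with multiplicity one. Your fallback --- that remaining branches ``track the Kelvin and Yanai-like branches of a smoothed profile'' --- would fail: the smooth-case estimate $E|\xi\pm E|\le\|f'\|_\infty$ degenerates under mollification, since smoothing a jump of size $2f_{jo}$ over width $\delta$ gives $\|f'\|_\infty\sim 2|f_{jo}|/\delta$, so the bound is vacuous precisely in the regime $|\xi|\lesssim\delta^{-1}$ where the jump branches detach from $0$; there is no uniform-in-$\xi$ control allowing limits to pass through the smoothing. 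The paper instead works directly on the discontinuous profile: the variational identity acquires boundary terms,
\[
\|\fa v\|^2+(\mu+1)(f'v,v)+\sum_{j=1}^J(\mu+1)\,[f]_j\,|v(y_j)|^2=(E^2-\xi^2)\|v\|^2,
\]
and Lemma \ref{lem:smallinner} shows that any normalized sector eigenfunction orthogonal to the jump-localized eigenfunctions $\psi_j$ must satisfy $|v(y_j)|\le C|\xi|^{1/2}e^{-C_\eta|\xi|}\|v\|$, because on $(y_j-2\eta,y_j+2\eta)$ any sector solution coincides with the explicit matched solution, so a non-negligible $|v(y_j)|$ would force a non-negligible inner product with $\psi_j$. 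With the boundary terms thus exponentially small, the smooth-case argument applies and forces $E(\xi)\to0$ (Theorem \ref{thm:jumps}). Without this exclusion mechanism (or a substitute, e.g., a quantitative analysis of the matching determinant ruling out other bounded roots uniformly in $\xi$), your claim ``exactly one localized branch per jump, all others reproduce the smooth flow'' is asserted rather than proved, and the formula $2\pi\sigma_I=2-\mJ_L(E)+\mJ_R(E)$ does not yet follow.
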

We thus deduce a violation of the BEC as soon as $ \mJ_L(E)\not=\mJ_R(E)$, i.e., as soon as for a frequency level $\alpha=E>0$, then the number of positive jumps greater than $2E$ is not equal to the number of negative jumps greater than $2E$ in absolute value.

When $f(y)$ has a unique jump of size $+2$ as does $f(y)=\sgn{y}$ (extended by a function $f(y)\to\pm\infty$ as $y\to\pm\infty$), then we obtain that 
\[
  2\pi \sigma_I = 2-\mJ_L(\alpha)=1,\quad 0<\alpha<1,\qquad 2\pi \sigma_I = 2-\mJ_L(\alpha)=2,\quad 1<\alpha.
\]
When $f(y)$ has (exactly) $J$ separate jumps of size $+2$, then we obtain more generally that 
\[
  2\pi \sigma_I = 2-\mJ_L(\alpha)=2-J,\quad 0<\alpha<1,\qquad 2\pi \sigma_I = 2-\mJ_L(\alpha)=2,\quad 1<\alpha.
\]
When $f(y)$ has $J$ separate jumps of size $-2$ instead, then we find that 
\[
  2\pi \sigma_I = 2+ \mJ_R(\alpha)=2+J,\quad 0<\alpha<1,\qquad 2\pi \sigma_I = 2+ \mJ_R(\alpha)=2,\quad 1<\alpha.
\]
These examples provide profiles of $f(y)$ for which the violation of the BEC can take any integer value. Several additional illustrations are provided in section \ref{sec:num}.

\medskip

We may thus interpret the simple shallow-water model \eqref{eq:H} as either satisfying the BEC when $f(y)$ is smooth or violating it when $f(y)$ displays discontinuities. It is not clear why the Coriolis force parameter should have any discontinuities so that the results of Theorem \ref{thm:smooth} may seem appropriate practically.

As observed in many other contexts \cite{bal2022topological,bal2023topological,graf2021topology,jud2024classifying,quinn2024approximations,rossi2024topology,souslov2019topological,tauber2019bulk,tauber2023topology,zhu2023topology}, the BEC of models of the form \eqref{eq:H} is fragile. Theorem \ref{thm:SF} identifies this fragility explicitly without the introduction of boundary conditions. For the latter problems, we refer to \cite{graf2021topology,jud2024classifying,tauber2019bulk,tauber2023topology}. 

Note that in the smooth case, the above result is a `verification' that the bulk edge correspondence holds, namely that `$2=2$', not a `derivation' as in different but general scenarios such as in, e.g.,
\cite{bal2022topological,bal2023topological,bourne2018chern,drouot2021microlocal,elbau2002equality,prodan2016bulk,quinn2024approximations,schulz2000simultaneous}. A derivation of the BEC that applies to a general class of operators including \eqref{eq:H} and to appropriate perturbed operator $H+V$ has been obtained in \cite[Proposition 4.9]{bal2022topological} when the derivative $f'(y)$ is bounded and {\em sufficiently small} as an application of a G\aa rding inequality.

This fragility is also not only a matter of regularization. While the above theorems describe the spectral flow of an unperturbed operator $H$ in \eqref{eq:H}, and while \cite[Proposition 4.9]{bal2022topological} shows the stability of the edge current $2\pi\sigma_I$ for a perturbed operator $H+V$ for certain perturbations $V$, that same result indicates that $2\pi\sigma_I$ should  {\em not} be stable for all $3\times3$ perturbations $V$. This was confirmed in numerical simulations in \cite[Eq.(4.2) \& Fig. 4]{quinn2024approximations}. For even quite small perturbations of the form $V={\rm Diag}(0,V_{22},V_{33})$, numerical simulations of $2\pi\sigma_I[H+tV]$ show that the edge current quickly destabilizes as $t$ increases. This instability should be reasonably independent of any regularization or modification of the model whether they restore the BEC or not. In contrast, $2\pi\sigma_I[H+V]$ is very robust against perturbations of the form $V={\rm Diag}(V_{11},0,0)$ or of the form $V_1\gamma_1+v_4\gamma_4+V_7\gamma_7$ \cite[Eq.(4.2) \& Fig. 4]{quinn2024approximations} in agreement with the prediction in \cite[Proposition 4.9]{bal2022topological}. The observed stability of the Kelvin and Yanai modes in practice may indicate the presence of symmetries that prevent destabilizing perturbations $V$ from occurring.

\section{Derivation of the main results}\label{sec:results}
This section presents a proof of the results of Theorems \ref{thm:smooth} and \ref{thm:SF}.

\paragraph{Hamiltonian.} We start from the Hamiltonian $H$ defined in \eqref{eq:H} and recall that after partial Fourier transform, we have
\begin{equation}\label{eq:hatHmE}
 \hat H -E = \left(\begin{matrix}  -E & \xi & D_y \\  \xi & -E & if \\ D_y & -if &  -E \end{matrix} \right).
\end{equation}
Since $f(y)\to \pm\infty$ as $y\to\pm\infty$, we deduce from Lemma \ref{lem:discrete} below that the self-adjoint operator $\hat H(\xi)$ has purely discrete spectrum away from $E=0$, where it may have essential (point) spectrum. This defines real-analytic branches $\xi\mapsto E_n(\xi)$ of absolutely continuous spectrum of the operator $H$.  The proof of the above theorems is based on computing the spectral flow \eqref{eq:SF} of each branch for (almost) all frequency levels $\alpha$.

We observe that for $\Gamma={\rm Diag}(1,1,-1)$, then
\[
 -\Gamma (\hat H(\xi)-E) \Gamma = \left(\begin{matrix}  E & -\xi & D_y \\  -\xi & E & if \\ D_y & -if &  E \end{matrix} \right).
\]
In other words $(\hat H(\xi)-E)\psi=0$ is equivalent to $(\hat H(-\xi)+E)\Gamma\psi=0$. This implies that $E_n(-\xi)=-E_{n'}(\xi)$ and that the continuous spectrum of $H$ is symmetric under $(\xi,E)\to(-\xi,-E)$. As a consequence, we focus on the branches $E_n(\xi)>0$ since the branches $0>E_n(\xi)$ are obtained by symmetry and have the same spectral flow properties.

%A more general operator such as $g=f(y) + \nu \Delta$ could be considered but this will not be done here.

We define $\fa:=\partial_y+f(y)$ so that $D_y-if=-i\fa$ and $D_y+if=i \fa^*$. Since we assume that $f(y)\to\pm\infty$ as $y\to\pm\infty$, the domains of definition of $\fa$ and $\fa^*$ are 
\[
 \mD(\fa) = \mD(\fa^*) = \{ u \in L^2(\Rm); \ f u \in L^2(\Rm), \ u' \in L^2(\Rm)\}.
\] 
On such a domain, $\fa^*$ is invertible and bounded as an operator from $\mD(\fa^*)$ to $L^2$. The operator $\fa$ has a non-trivial kernel given by a normalization of the function $e^{-F(y)}$ where $F(y)=\int_0^y f(z)dz$. We verify as in, e.g., \cite{bal2022topological}, that the kernel of $\fa^*$ is trivial and hence that the index (dimension of kernel minus dimension of co-kernel) of the Fredholm operator $\fa$ (from $\mD(\fa)$ to $L^2$) is $1$. 

%%
%\begin{remark} When $g=f+\nu\Delta$, then $\fa=\partial_y + f(y) +\nu(\partial^2_y-\xi^2)$. We would then need to show that the index of this operator is still $1$. When $f$ is bounded, this may depend on $\xi$.
%\end{remark}
%%

%We now invoke Lemma \ref{lem:discrete} below to obtain that $\hat H(\xi)$ has purely discrete spectrum away from $E=0$ and may have essential spectrum at $E=0$. We denote by $E_n(\xi)$ the eigenvalues of $\hat H(\xi)$ and obtain from Lemma \ref{lee:discrete} that $\xi\to E_n(\xi)$ are real-analytic away from $E=0$. 
 
 We now deduce several properties of the branches $\xi\mapsto E_n(\xi)$ directly from the equation $(\hat H-E)\psi=0$ using the expression \eqref{eq:hatHmE}. We recall the notation $\psi=(\eta,u,v)^t$.

\paragraph{Case $E(\xi) = \xi$.} This is the Kelvin mode. It is always present with $\eta = u = ce^{-F(y)} $ and $v = 0$. Conversely, $E = \xi$ implies that $\fa^*v = 0$ and hence $v = 0$ while $\eta = u$ solves $\fa u = 0$ and hence is also unique $\eta = u = ce^{-F(y)} $. No other branch $E_n(\xi)$ is thus allowed to cross $E_0(\xi) = \xi$.

\paragraph{Case $v=0$.} This implies $E^2=\xi^2$ for otherwise $u=\eta=0$. The case  $E=-\xi$ implies $u=-\eta$ and then $\fa^*u=0$ and hence $u=0$. So, a third component $v=0$ is only possible when $E=\xi$ as above.  

\paragraph{Case $E(\xi)=-\xi$.}  This constraint implies that $v$ is fixed as a solution of $\fa v=0$, i.e., $v(y)=c e^{-F(y)}$ for some $c\not=0$. For $\xi<0$ since $E>0$, we then obtain
\[ -\fa^*\eta= i\frac{f^2-\xi^2}\xi v\]
has a unique well defined solution $\eta$ with source in $L^2$ since $f^2v$ is clearly in $L^2(\Rm)$. Then $u+\eta=-\frac{ifv}\xi$ implies that $\eta$ is uniquely characterized. Now,
\[ 0 = (\eta,\fa v) = (\fa^*\eta,v)   =  i [\xi\|v\|^2 - \xi^{-1} \|fv\|^2].\] 
Since $\xi<0$, this implies the unique solution 
\begin{equation}\label{eq:Ecrossing}
  \xi = - \frac{\|fv\|}{\|v\| }= - \frac{\|v'\|}{\|v\|}  = -\dfrac{\| f e^{-F} \| }{\| e^{-F}\|} = -E.
\end{equation}
This provides a unique solution for $E(\xi)=-\xi$ and $E>0$.  In other words, there is a unique branch crossing the half-line $0<E=-\xi$. This branch models the Yanai mode.

%\paragraph{Analytic branches.} Getting to that result $\xi\to E(\xi)$ analytic in 1d from Kato. Symmetry $E(-\xi)=-E(\xi)$. The Hamiltonians and eigenvectors satisfy
%\[
% \begin{pmatrix}1 & 0 & 0 \\ 0 & 1 & 0 \\ 0 & 0 & -1  \end{pmatrix}  \hat H(\xi) \begin{pmatrix}1 & 0 & 0 \\ 0 & 1 & 0 \\ 0 & 0 &-1  \end{pmatrix} =- \hat H(-\xi)\qquad 
%  \begin{pmatrix} \eta \\ u \\ v \end{pmatrix} (-\xi) = \begin{pmatrix} \eta \\ u \\ -v \end{pmatrix} (\xi).
%\]
\paragraph{Branches are simple away from $E=0$.} We now show that each branch $\xi\mapsto E(\xi)$ is simple. 
We already know this to be the case for $E(\xi)=\xi$ where $v=0$. Otherwise, $v\not=0$ and after elimination of $(\eta,u)$  as in \eqref{eq:etau}-\eqref{eq:v} below solves:
\[ 
(-\partial^2_y + f^2 + \frac \xi E f') v= (E^2-\xi^2)v.
\]
We use a standard method of Wronskians for the above Sturm-Liouville problem as in, e.g., \cite{teschl2009mathematical}.
Assume $f$ smooth except at a finite set of distinct points $y_j$. Start on $(-\infty,y_1)$ on which $f$ is smooth. Assume two normalized solutions $v_{1,2}$. Then the Wronskian $v_1v_2'-v_2v_1'$ is constant and hence vanishing on $(-\infty,y_1)$ since the functions $v$ are normalized. At points of singularities of $f$, we have for $k=1,2$ that 
\[ -[v_k'](y_j) + \frac \xi E [f](y_j) v_k(y_j)=0,\]
where we use the notation $[g](y)=g(y^+)-g(y^-)$,
%\ty{where we use $[g](y_0)=g(y_0^+)-g(y_0^-)$ (since we're going to introduce notation later anyway)}
so that the Wronskian $v_1v_2'-v_2v_1'$ still remains continuous across points of discontinuity of $f$ and hence vanishes on $\Rm$. Also, $v_j$ cannot vanish on an open set for otherwise as solutions of a second-order ODE, they vanish between points of discontinuity $y_j$ as well as a across discontinuities from the above jump condition.

This implies $v_2=\alpha v_1$ for $\alpha\not=0$ on the whole line $y\in\Rm$. Indeed, on $(-\infty,y_1)$, $v_1v_2'-v_2v_1'=0$ implies that $v_2=\alpha v_1$ for $\alpha\not=0$ between points where $v_1=0$ since $v_j$ cannot vanish on an open set.  Note that points where $v_1$ vanishes are necessarily isolated for otherwise, using the above jump conditions and structure of the above second-order ODE, we derive that $v_1\equiv0$, which is not possible. By continuity, $v_2'=\alpha v_1'$ where $v_1$ vanishes so that $v_2=\alpha v_1$ remains valid across points where $v_1=0$ as well. The above jump condition implies that $\alpha$ also remains constant across jumps points of $f$.  Indeed, the same relation shows that if $v_2=\alpha v_1$ on the left of a singularity point $y_j$, then the same property holds on the right of it and hence iteratively on the whole line. 

This implies $v_2=\alpha v_1$ and the branch has to be unique.

%%% \ty{I think the Wronskian needs to be constant and = 0 for the two solutions to be linearly dependent. } \gb{I believe this solves the problem.}

\paragraph{Case $E(\xi)=-\xi$ revisited.} Since the branches $\xi\mapsto E_n(\xi)$ are analytic, we obtained that a unique branch crossed $E=-\xi$ given by \eqref{eq:Ecrossing}. We then show that $E'(\xi)>-1$ when $E(\xi)=-\xi>0$. This means that this one branch crosses and then converges to $+\infty$ for $\xi\to+\infty$ since no branch is allowed to cross $E=\xi$. Indeed, we obtain from
\[ H\psi=E\psi\]
that
\[ H'\psi + H \psi' = E'\psi + E\psi'\]
and hence that 
\[ (\psi,H'\psi) = E'\]
for a normalized $\|\psi\|=1$. Now $H'={\rm Diag}(\sigma_1,0)$ has eigenvalues $\{-1,0,+1\}$ with $\psi$ proportional to $(1,-1,0)^t$ at almost all $y$ if we want $E'=-1$. This implies $\eta+u=0$ and hence $v=0$, which is not the case when $E(\xi)=-\xi$. Therefore $E'\in (-1,1]$ above. This shows that the branch crossing the line $E(\xi)+\xi=0$ does so with a slope greater than $-1$.

The branch thus converges to $+\infty$ as $\xi\to+\infty$ since it cannot cross the branches $E=\pm\xi$ except at the point given by \eqref{eq:Ecrossing}. It remains in the sector $0<E<-\xi$ for $\xi<\xi_0$ with $\xi_0$ the only point where $E(-\xi_0)=\xi_0$ in \eqref{eq:Ecrossing}.

We have identified a Kelvin branch $E(\xi)=\xi$ and a Yanai branch $E(\xi)$ such that $E(-\xi_0)=\xi_0$ with $E(\xi)\to+\infty$ as $\xi\to\infty$ and $0<E(\xi)<-\xi$ for $\xi<\xi_0$.

\paragraph{Case $|E|\not=|\xi|$.} For the remaining branches, set $\mu=\frac\xi E$ with $|\mu|\not=1$. The first two equations in \eqref{eq:hatHmE} are
\begin{equation}\label{eq:etau}
  \begin{pmatrix}  -E & \xi  \\ \xi & -E \end{pmatrix}  \begin{pmatrix}  \eta \\ u \end{pmatrix} =  - \begin{pmatrix}  D_y v \\ if v \end{pmatrix} ,\qquad 
   \begin{pmatrix}  \eta \\ u \end{pmatrix} = \frac{1}{\xi^2-E^2 }   \begin{pmatrix}  E & \xi  \\ \xi & E \end{pmatrix} \begin{pmatrix}  D_y  \\ if \end{pmatrix} v
\end{equation}
so that 
\[
  (\xi^2-E^2) (D_y \eta  - i f u) = E D_y^2v+\xi D_y(ifv) - if \xi D_y v - if E -f v = E(D_y^2+f^2)v + \xi f' v.% = (E^2-\xi^2) Ev.
\]
The last equation
\[
  D_y \eta  - i f u = Ev 
\]
is thus equivalent to
\begin{equation}\label{eq:v}
   L_\mu v := (-\partial_y^2+f^2 + \mu f') v = (E^2-\xi^2)v = E^2(1-\mu^2) v.
\end{equation}
We are therefore looking to $v$ solution in $L^2(\Rm)$ of the above equation and such that $D_yv$ and $fv$ are also in $L^2(\Rm)$.
\paragraph{Asymptotics of branches when $f'$ is bounded.}
%
%%\tb{We may directly write setting with jumps.}

We assume first that $f(y)\in W^{1,\infty}(\Rm)$ is uniformly Lipschitz. We compute
\[
  \fa^*\fa = (-\partial_y+f)(\partial_y+f) = -\partial_y^2 + f^2 - f',\quad \fa\fa^* = -\partial_y^2 + f^2 + f'
\]
so that 
\[
   L_\mu = \fa^*\fa + (1+\mu) f' = \fa\fa^* + (\mu-1) f'.
\]
In particular, we have
\[
   \|\fa v\|^2 + (\mu+1) (f'v,v) = (E^2-\xi^2) \|v\|^2.
\]
This is recast as 
\[
  (\xi^2-E^2)\|v\|^2 = -\frac{\xi+E}{E} (f'v,v) - \|\fa v\|^2 \leq \frac{|\xi+E|}E \|f'\|_\infty  \|v\|^2.
\]
On the sectors $\xi^2>E^2$, we thus obtain that 
\[
 E |\xi-E| \leq \|f'\|_\infty.
\]
When $E<-\xi$, this implies that each such branch $E_n(\xi)\to0$ as $\xi\to-\infty$.

We also observe for the same reason  that 
\[
   \|\fa^* v\|^2 + (\mu-1) (f'v,v) = (E^2-\xi^2) \|v\|^2.
\]
On  $\xi^2>E^2$, we thus obtain that 
\[
  E |\xi+E| \leq \|f'\|_\infty.
\]
When $0<E<\xi$, we thus deduce $E\to0$ as well as $\xi\to\infty$.

\medskip

We are now ready to conclude the proof of Theorem \ref{thm:smooth}.

\begin{proof}[Proof of Theorem \ref{thm:smooth}] We first recall the Kelvin branch $E(\xi)=\xi$. We also identified the Yanai branch satisfying $E(\xi)\to0$ as $\xi\to-\infty$ while $E(\xi)\to\infty$ while $\xi\to\infty$.  The spectral flow \eqref{eq:SF} of each one of these branches therefore equals $1$.  All other branches have a spectral flow equal to $0$. Indeed, they either live in the sector $E(\xi)>|\xi|$ and hence converge to $+\infty$ as $\xi\to\pm\infty$. Otherwise,  they either live in the sector $0<E(\xi)<-\xi$ and then converge to $0$ as $\xi\to-\infty$ as well as to $0$ as $0>\xi\to0$ (or possibly reach $0$ for a negative value of $\xi$), or in the sector $0<E(\xi)<\xi$ and then converge to $0$ as $\xi\to\infty$ and to $0$ as $0>\xi\to0$ (or possibly reach $0$ for a positive value of $\xi$). The sum of the spectral flows of the branches of absolutely continuous spectrum thus equals $2$.
\end{proof}

\paragraph{Special case of $f'$ bounded with $f$ strictly monotone.}  Assume first that $f'>0$. Then no branch $\xi\mapsto E(\xi)$ is possible in the sector $\xi>E>0$. Indeed, from the above equality
$$\|\fa v\|^2 + (\mu+1) (f'v,v) = (E^2-\xi^2) \|v\|^2,$$
we deduce in that case that $\mu>1$ while $E^2-\xi^2<0$ and hence $f'v^2=0$ so that $v=0$. Similarly, we deduce from 
$$\|\fa^* v\|^2 + (\mu-1) (f'v,v) = (E^2-\xi^2) \|v\|^2$$
that no branch $\xi\mapsto E(\xi)$ exists in the sector $-\xi>E>0$ when $f'<0$. 

% \paragraph{Special case when $f'$ is bounded and has same sign.}\hfill

% \ty{If $f'>0$ then there is no branch for $\xi>E>0$; if $f'<0$ then there is no branch for $-\xi>E>0$.}

% \begin{proof}
%     \ty{Using the fact above $$\|\fa v\|^2 + (\mu+1) (f'v,v) = (E^2-\xi^2) \|v\|^2$$ and similarly $$\|\fa^* v\|^2 + (\mu-1) (f'v,v) = (E^2-\xi^2) \|v\|^2$$ we get that for $\mu>-1$ $L_\mu$ is a non-negative operator with no solution $v\neq 0$ for $\mu>1$ since here $E^2-\xi^2<0$. For the other case it's true by the same argument on the second line.}
% \end{proof}   

%
\paragraph{Solution of local jump problem.}
Assume $f(y)$ piecewise-constant with exactly one jump at $y_0$. We denote by $\chi_{y>y_0}$ the function equal to $1$ when $y>y_0$ and $0$ otherwise. Following the appendix in \cite{bal2022topological}, we have the result: 
\begin{lemma}\label{lem:purejump}
 Assume $f=f_+\chi_{y>y_0}+f_-\chi_{y<y_0}$. Define
 \[
   f_o=\frac{f_+-f_-}2,\quad f_e=\frac{f_++f_-}2 ,\qquad \sE= \frac{-f_o\xi}{\sqrt{f_e^2+\xi^2}},\qquad \kappa_\pm =  \sqrt{f_\pm^2+\xi^2-\sE^2} .
 \]
Assume $-f_o\xi>0$.  Then there is a unique normalized solution in $L^2(\Rm;\Cm^3)$ of
 \[
   (\hat H(\xi)-E)\psi=0
 \]
 such that $0<E<|\xi|$. Moreover, $E=\sE$ as given above and $\psi=\varphi$ with
 \[
   \varphi(y) = \begin{pmatrix}  \frac{Ei\kappa_+ + i\xi f_+ }{E^2-\xi^2} \\ \frac{\xi i\kappa_+ + iE f_+ }{E^2-\xi^2} \\ 1\end{pmatrix} v(y_0)  e^{-\kappa_+(y-y_0)}  \chi_{y>y_0} \ + \ 
    \begin{pmatrix}  \frac{Ei\kappa_+ + i\xi f_+ }{E^2-\xi^2} \\ \frac{-\xi i\kappa_- + iE f_- }{E^2-\xi^2} \\ 1\end{pmatrix} v(y_0) e^{\kappa_-(y-y_0)} \chi_{y<y_0},
 \]
 where $v(y_0)\in\Cm$ is a normalizing constant.
\end{lemma}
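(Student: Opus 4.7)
The plan is to exploit the piecewise constant structure of $f$ and solve the system explicitly on each half-line, then glue the two pieces using matching conditions dictated by the first-order structure of $\hat H$. First, on $y>y_0$ where $f\equiv f_+$ is constant, the equation $(\hat H(\xi)-E)\psi=0$ has constant coefficients. Eliminating $u$ via the algebraic second row $u=(\xi\eta+if_+v)/E$, the remaining equations close as a $2\times 2$ constant-coefficient system in $(\eta,v)$. Seeking exponentially decaying solutions $\propto e^{-\kappa_+(y-y_0)}$ with $\kappa_+>0$ for $L^2$ integrability at $+\infty$, the characteristic equation reads $\kappa_+^2=f_+^2+\xi^2-E^2$, and the corresponding one-dimensional mode expresses $\eta$ and $u$ as multiples of $v$ with exactly the coefficients in the first summand of the stated $\varphi$. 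The same argument on $y<y_0$, now selecting exponentials $e^{\kappa_-(y-y_0)}$ with $\kappa_->0$, produces the second summand together with $\kappa_-^2=f_-^2+\xi^2-E^2$.

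Next I would impose matching at $y_0$. Since $D_y=-i\partial_y$ acts only on $v$ in row 1 and on $\eta$ in row 3, a solution in $L^2$ forces $v$ and $\eta$ to be continuous across $y_0$, while $u$, determined algebraically from row 2 with a jumping $f$, is free to jump. Continuity of $v$ fixes a common normalization $v(y_0)$ on both sides, while continuity of $\eta$ yields the single scalar constraint
\[
 iE\kappa_+ + i\xi f_+ = -iE\kappa_- + i\xi f_-,
\]
equivalently $E(\kappa_++\kappa_-)=\xi(f_--f_+)=-2\xi f_o$.

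The last step is to show that this constraint, combined with the two characteristic relations, determines $(E,\kappa_+,\kappa_-)$ uniquely under the stated hypotheses. Subtracting the two characteristic identities gives $\kappa_+^2-\kappa_-^2=4f_ef_o$; dividing by $\kappa_++\kappa_-$ from the matching constraint expresses $\kappa_+-\kappa_-$ as a multiple of $E/\xi$. Solving this linear system for $\kappa_\pm$ and substituting back into one characteristic relation yields the quadratic
\[
 (f_e^2+\xi^2)\,E^4 - \xi^2(f_e^2+f_o^2+\xi^2)\,E^2 + f_o^2\xi^4 = 0,
\]
which factors as $(E^2-\xi^2)\bigl((f_e^2+\xi^2)E^2-f_o^2\xi^2\bigr)=0$. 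The constraint $E<|\xi|$ rules out the Kelvin-type root $E^2=\xi^2$, leaving $E^2=\sE^2$; the sign $E>0$ then picks out $E=\sE=-f_o\xi/\sqrt{f_e^2+\xi^2}$, which is positive precisely when $-f_o\xi>0$. Back-substitution gives $\kappa_\pm=\sqrt{f_e^2+\xi^2}\pm f_ef_o/\sqrt{f_e^2+\xi^2}$, both positive under the hypotheses, and the explicit formulas for $u$ on each side follow from the second row of $\hat H-E$.

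The main obstacle is bookkeeping the algebraic elimination leading to the quadratic in $E^2$; once that identity is in hand, uniqueness of the decaying mode on each half-line and the matching prescribed by the first-order structure completely determine $\varphi$. The appearance of the spurious root $E^2=\xi^2$ is precisely what makes the constraint $E<|\xi|$ in the statement a genuine selection criterion, excluding the Kelvin branch from this piece of the analysis.
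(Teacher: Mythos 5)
Your proof is correct and follows essentially the same route as the paper: decaying exponentials on each half-line with rates $\kappa_\pm^2=f_\pm^2+\xi^2-E^2$, matching at $y_0$ via continuity of $\eta$ and $v$ (with $u$ allowed to jump), and the same factored quartic $(E^2-\xi^2)\bigl((f_e^2+\xi^2)E^2-f_o^2\xi^2\bigr)=0$ with the admissible root selected by $0<E<|\xi|$, $E>0$, and positivity of $\kappa_\pm$. The only, immaterial, difference is that the paper extracts the quantization condition $E(\kappa_++\kappa_-)=-2\xi f_o$ from the $\delta$-potential term $\mu f'$ in the reduced second-order equation for $v$, whereas you obtain the identical relation directly from continuity of $\eta$.
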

Note that $\varphi(y,-\xi)=\bar\varphi(y,\xi)$ reflecting the real-valuedness of solutions to \eqref{eq:H}.
\begin{proof}
To make sense of the differential equations and obtain solutions in the domain of definition of $\hat H$, we impose the continuity of $\eta$ and $v$ at $y=y_0$. Note that no such condition may be imposed on $u$ a priori. We then obtain the jump conditions at $y=y_0$:
\[
  iv(y_0)[f] = E [u] ,\quad \xi v(y_0) [f] = E [\partial_y v],\quad [\partial_y\eta]= -[fu].
\]
%\ty{Here, we use $[g](y_0)=g(y_0^+)-g(y_0^-)$.(I've moved this to a few pages earlier)} 
We recall the notation $[g](y)=g(y^+)-g(y^-)$.
This implies that while $\eta$ and $v$ are continuous at $y=y_0$, then $u$ is not in light of the first relation.

% \ty{The correct version should be \[
%   iv[f] = E [u] ,\quad \xi v[f] = E [\partial_y v],\quad [\partial_y\eta]= -[fu].
% \] and we get this because we assume $\eta, v$ continuous and $u$ not. I think we'd need an argument for this (maybe just by assumption the solution to $H$ is continuous so $\eta, v$ are, but $u = \hat{\check{\varphi_1}}$ is Fourier transformed so no requirement on $u$. This plus the three conditions says $u$ is actually discontinuous.)} \gb{OK. Should be fixed.}

Since $|E|<|\xi|$, we can eliminate $(\eta,u)$ as in \eqref{eq:etau} and verify that we are looking for a solution of the form
\[
  \varphi(y) = \begin{pmatrix} \eta \\ u_- \\ v \end{pmatrix} e^{\kappa_-(y-y_0)}  \chi_{y<y_0} + \begin{pmatrix} \eta \\ u_+ \\ v \end{pmatrix} e^{-\kappa_+(y-y_0)}  \chi_{y>y_0} .
\]
Thus
\[
   [ \partial_y v] + (\kappa_++\kappa_-) v =0,\quad [ \partial_y \eta] + (\kappa_++\kappa_-)  \eta =0.
\]
From the equation \eqref{eq:v} for $v$, we deduce 
\[
  E^2-\xi^2 = f_+^2-\kappa_+^2= f_-^2-\kappa_-^2, \qquad \kappa_++\kappa_-+\frac\xi E(f_+-f_-)=0.
\]
This implies that $\kappa_\pm>0$ are real-valued and $\xi E f_o<0$. From $\kappa_+^2-\kappa_-^2=f_+^2-f_-^2$, we deduce that
$\kappa_+-\kappa_-  + \frac E\xi (f_++f_-)=0$.  Let us define $\nu=\frac E \xi$. Then,
\[
  \mu_+ + \nu^{-1} f_o + \nu f_e=0 \quad \mbox{ so that } \quad \mu_+^2 = \nu^{-2} f_o^2 + f_of_e + \nu^{2} f_e^2 = f_+^2+\xi^2(1-\nu^{2}).
\]
This gives the equation for $\nu$, or equivalently $E$, using $f_+^2-2f_of_e=f_o^2+f_e^2$:
\[
  (f_e^2+\xi^2)\nu^4 - (f_o^2+f_e^2+\xi^2)\nu^2 + f_o^2=0 \ \mbox{ or } \ (f_e^2+\xi^2) (\nu^2-1)(\nu^2-\frac{f_o^2}{f_e^2+\xi^2}) =0.
\]
%We know the existence of a branch $E(\xi)=\xi$ (Kelvin waves with $v=0$) and ruled out $E(\xi)=-\xi$.
Looking for solutions $0<E<|\xi|$ with $\xi E f_o<0$, the only admissible solution is
\begin{equation}\label{eq:Exi}
  E = \frac{-\xi f_o}{\sqrt{f_e^2+\xi^2}}.
\end{equation}
Solving for $(\eta,u)$ in \eqref{eq:etau} then gives the result stated in the lemma.
\end{proof}

\paragraph{Asymptotics of branches in the presence of a finite number of jumps.}
We assume that $f$ has a finite number of jumps at points $y_j$ for $1\leq j\leq J$ and recall  the assumption that for some $\eta>0$, then $f(y)=f(y_j^-)$ on $(y_j^--2\eta,y_j^-)$ and $f(y)=f(y_j^+)$ on $(y_j^+, 2\eta+y_j^+)$. We assume that each jump $|[f](y_j)|\geq\eta>0$ and $f$ is sufficiently smooth between jumps. As a consequence, from ODE theory, we obtain that $\psi(y)$ is correspondingly smooth between jumps. 

At any jump point $y_j$ of $f(y)$, let us define $\varphi_j(y)$ as the solution given by Lemma \ref{lem:purejump} where $(y_0,f_+,f_-)$ is replaced by $(y_j,f(y_j^+),f(y_j^-))$. 
We denote by $0<\sE_j(\xi)$ the corresponding eigenvalue. % and by $v_j(y_j)=v_j(\xi)$ the normalizing constant of the solution $\varphi_j$ such that $\|\varphi_j\|=1$. 
%%% \gb{Yes, was removed.}
Let us further define the cut-off function $\phi(y)\in C^\infty_c(\Rm)$ such that $\phi(y)=0$ for $|y|\geq2$ while $\phi(y)=1$ for $|y|\leq1$. We then observe by standard estimates that 
\[
 (\hat H(\xi)-\sE_j) \varphi_{j\eta}(y) = S(\xi),\quad  \varphi_{j\eta}(y) = \varphi_j(y) \phi(\frac{y-y_j}\eta),\quad\|S(\xi)\|_2 \leq e^{-C_\eta|\xi|},
\]
for some constant $C_\eta>0$. We thus obtain the following result:
\begin{corollary}\label{cor:jumps}
  Let $f(y)$ be as above. Then, for all $|\xi|>\xi_0(\eta)$ and $1\leq j\leq J$, there exist $(\psi_j,E_j)$ solutions of
  \[
    (\hat H(\xi)-E_j)\psi_j=0
  \]
  with $|E_j-\sE_j|\leq C e^{-C_\eta|\xi|}$ and $\|\psi_j - \varphi_{j\eta}  \|\leq C e^{-C_\eta|\xi|}$ for a normalized $\|\psi_j\|=1$.
\end{corollary}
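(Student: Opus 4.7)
The plan is to treat $\varphi_{j\eta}$ as a quasi-mode for $\hat H(\xi)$ at the quasi-energy $\sE_j$, and to extract a nearby genuine eigenpair via the self-adjoint spectral theorem. The exponential decay of $\varphi_j$ away from $y_j$, at a rate proportional to $|\xi|$, will make the localization error exponentially small in $|\xi|$.

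First I would establish the identity
\[
   (\hat H(\xi)-\sE_j)\varphi_{j\eta} = [\hat H_j(\xi),\chi_\eta]\,\varphi_j, \qquad \chi_\eta(y)=\phi\!\left(\tfrac{y-y_j}{\eta}\right),
\]
where $\hat H_j(\xi)$ denotes the pure-jump operator of Lemma \ref{lem:purejump} associated with $(y_j,f(y_j^+),f(y_j^-))$. Two facts justify this: on $\mathrm{supp}\,\chi_\eta\subset(y_j-2\eta,y_j+2\eta)$ the profile $f(y)$ coincides with the two-step profile of Lemma \ref{lem:purejump} by the piecewise-constant assumption near jumps stated in Section \ref{sec:main}, so $\hat H(\xi)$ acts as $\hat H_j(\xi)$ on $\chi_\eta\varphi_j$; and $\varphi_j$ is an exact eigenfunction of $\hat H_j(\xi)$ with eigenvalue $\sE_j$. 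Only the $D_y$ block of $\hat H_j(\xi)$ fails to commute with $\chi_\eta$, so the commutator equals $-i\chi_\eta'$ times a fixed off-diagonal matrix and is supported in $\{\eta\le|y-y_j|\le 2\eta\}$. On this set Lemma \ref{lem:purejump} gives $|\varphi_j(y)|\le Ce^{-\kappa_\pm\eta}$ with $\kappa_\pm=\sqrt{f_\pm^2+\xi^2-\sE_j^2}$. Since $|\sE_j(\xi)|\to|f_{jo}|$ stays bounded as $|\xi|\to\infty$, we have $\kappa_\pm\ge c|\xi|$ for $|\xi|$ large, hence $\|S(\xi)\|\le Ce^{-C_\eta|\xi|}$ and, after $L^2$-normalizing $\varphi_j$, also $\|\varphi_{j\eta}\|=1+O(e^{-C_\eta|\xi|})$.

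Next I would invoke the quasi-mode principle. By Lemma \ref{lem:discrete}, $\hat H(\xi)$ has purely discrete spectrum in a neighborhood of $\sE_j$ for $|\xi|$ large, since $\sE_j$ is bounded away from $0$ whenever $f_{jo}\ne 0$. Self-adjointness and the spectral theorem give
\[
   \mathrm{dist}\bigl(\sE_j,\sigma(\hat H(\xi))\bigr)\cdot\|\varphi_{j\eta}\| \le \|(\hat H(\xi)-\sE_j)\varphi_{j\eta}\|,
\]
producing an eigenvalue $E_j$ of $\hat H(\xi)$ with $|E_j-\sE_j|\le Ce^{-C_\eta|\xi|}$. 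For the eigenvector I would let $P_j$ be the spectral projection of $\hat H(\xi)$ onto a fixed window $[\sE_j-\delta,\sE_j+\delta]$, with $\delta>0$ smaller than $\mathrm{dist}(\sE_j,0)$ and than $\tfrac12\min_{k\ne j}|\sE_j-\sE_k|$ in the $|\xi|\to\infty$ limit. Functional calculus yields $\|(I-P_j)\varphi_{j\eta}\|\le\delta^{-1}\|S(\xi)\|\le Ce^{-C_\eta|\xi|}$, and I would set $\psi_j=P_j\varphi_{j\eta}/\|P_j\varphi_{j\eta}\|$, an $L^2$-normalized vector lying in the spectral subspace associated with eigenvalues close to $\sE_j$.

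The main obstacle is to ensure that $\psi_j$ is genuinely an eigenvector for the specific eigenvalue $E_j$, equivalently that the range of $P_j$ is one-dimensional for $|\xi|$ large. To rule out a second eigenvalue $E'$ of $\hat H(\xi)$ in $[\sE_j-\delta,\sE_j+\delta]$, I would invoke exponential decay of eigenvectors with eigenvalues strictly inside $(-|\xi|,|\xi|)$ --- an Agmon-type estimate based on the fact that $\fa\fa^*$ and $\fa^*\fa$ give lower bounds on $-\partial_y^2+f^2\pm f'$ --- to transfer any hypothetical second mode back to the pure-jump problem, where Lemma \ref{lem:purejump} guarantees uniqueness. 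Once $\dim\,\mathrm{Ran}\,P_j=1$ is secured, $\psi_j$ is the claimed eigenvector, and $\|\psi_j-\varphi_{j\eta}\|\le Ce^{-C_\eta|\xi|}$ follows from $\|(I-P_j)\varphi_{j\eta}\|\le Ce^{-C_\eta|\xi|}$ together with $\|\varphi_{j\eta}\|=1+O(e^{-C_\eta|\xi|})$, after a choice of overall phase.
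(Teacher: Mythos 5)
Your proposal follows essentially the same route as the paper: construct the cut-off quasimode $\varphi_{j\eta}$, bound the residual $\|(\hat H(\xi)-\sE_j)\varphi_{j\eta}\|\leq Ce^{-C_\eta|\xi|}$ via the exponential decay rates $\kappa_\pm\gtrsim|\xi|$ of the pure-jump eigenfunction of Lemma \ref{lem:purejump}, and then invoke self-adjointness and the spectral theorem (the distance-to-spectrum/resolvent bound) to extract a nearby eigenpair. Your extra discussion of the spectral projection and of the rank of the spectral window is a detail the paper's one-line eigenvector claim leaves implicit, but it is a refinement of, not a departure from, the paper's argument.
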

\begin{proof}
 This is coming from the fact that $\hat H(\xi)$ is self-adjoint so that there is an element in the spectrum of $\hat H(\xi)$ close to $\sE_j$ for $|\xi|$ sufficiently large. This is a direct consequence of the spectral theorem \cite{Kato66}. Indeed, the above construction of $\varphi_{j\eta}$ implies that if $\sE_j$ is not an eigenvalue of $\hat H(\xi)$ (if it is we are done), then $\sup_{E\in\sigma(\hat H(\xi))} |E-\sE_j|^{-1} = \|(\hat H(\xi)-\sE_j)^{-1}\|\geq e^{C_\eta|\xi|}$. The eigenvector is then close to the quasi-eigenvector as stipulated in the corollary.
\end{proof}
\begin{lemma}\label{lem:smallinner}
  Let $\psi$ be a solution of the problem 
  \[
    (\hat H(\xi)-E)\psi=0
  \]
  for $0<E<|\xi|$ with $\|\psi\|=1$. Let $v(y_j)$ be the third component of $\psi(y_j)$ and $v_{j}(y_j)$ the third component of $\psi_{j}(y_j)$ defined in Corollary \ref{cor:jumps}. Then
  \[
     \big| (\psi, \psi_{j}) \big| \geq \frac{|v(y_j)|}{|v_{j} (y_j)|} -C e^{-C_\eta |\xi|},
  \]
  for some constant $C>0$ independent of $j$ and $|\xi|$ sufficiently large.
\end{lemma}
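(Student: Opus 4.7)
The plan is to compute the inner product $(\psi,\varphi_{j\eta})$ explicitly by exploiting the exponential localization of $\varphi_j$ at $y_j$ on the scale $\kappa_\pm^{-1}\sim|\xi|^{-1}$, and then to transfer the estimate to $(\psi,\psi_j)$ using $\|\psi_j-\varphi_{j\eta}\|\le Ce^{-C_\eta|\xi|}$ from Corollary~\ref{cor:jumps}.

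First I would analyze $\psi$ locally near $y_j$. Since $f$ is piecewise constant on $(y_j-2\eta,y_j+2\eta)$, the equation $(\hat H(\xi)-E)\psi=0$ reduces to a constant-coefficient ODE on each of the two slabs. Eliminating $u$ as in \eqref{eq:etau} leaves a $2\times 2$ system for $(\eta,v)$ with purely real characteristic exponents $\pm\kappa_\pm$ (possible because $0<E<|\xi|$), so one may write
\[
  \psi(y)=a_R\,\psi_d^{(R)}(y)+b_R\,\psi_g^{(R)}(y),\qquad y\in(y_j,y_j+2\eta),
\]
where $\psi_d^{(R)}(y)=(\alpha,\beta_+,1)^t e^{-\kappa_+(y-y_j)}$ coincides on this slab with $\varphi_j(y)/v_j(y_j)$, and $\psi_g^{(R)}$ is the corresponding growing solution; an analogous decomposition $\psi=a_L\psi_d^{(L)}+b_L\psi_g^{(L)}$ holds on $(y_j-2\eta,y_j)$.

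The main obstacle will be showing that $|b_R|$ and $|b_L|$ are exponentially small in $|\xi|$: any nonnegligible growing amplitude would force $\psi$ to be huge at the outer edge of the slab, contradicting $\|\psi\|_2=1$. Quantitatively, $\int_{y_j}^{y_j+2\eta}|\psi|^2\,dy$ is a positive-definite quadratic form in $(a_R,b_R)$ whose $|b_R|^2$ coefficient is of order $e^{4\kappa_+\eta}/\kappa_+$ whereas its cross term is only of order $\eta$; a Schur complement then yields $|b_R|\le C\sqrt{\kappa_+}\,e^{-2\kappa_+\eta}$. Continuity of the third component of $\psi$ at $y_j$ (imposed by the domain of $\hat H$, together with the jump condition on $u$) gives $a_R+b_R=v(y_j)=a_L+b_L$, hence $a_R=v(y_j)+O(e^{-C_\eta|\xi|})$ and likewise for $a_L$.

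Plugging this expansion into $(\psi,\varphi_{j\eta})$, the $b$-terms contribute only exponentially small errors, and integrating the decaying-mode terms gives, up to an exponentially small tail from the cutoff,
\[
(\psi,\varphi_{j\eta})=\bar v(y_j)\,v_j(y_j)\left[\frac{|\alpha|^2+|\beta_+|^2+1}{2\kappa_+}+\frac{|\alpha|^2+|\beta_-|^2+1}{2\kappa_-}\right]+O(e^{-C_\eta|\xi|}).
\]
The bracket equals $\|\varphi_j\|^2/|v_j(y_j)|^2$, and $\|\varphi_j\|^2=1+O(e^{-C_\eta|\xi|})$ since $\|\psi_j\|=1$ and $\|\psi_j-\varphi_{j\eta}\|\le Ce^{-C_\eta|\xi|}$. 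Thus $(\psi,\varphi_{j\eta})=\overline{v(y_j)/v_j(y_j)}+O(e^{-C_\eta|\xi|})$, and $|(\psi,\psi_j-\varphi_{j\eta})|\le Ce^{-C_\eta|\xi|}$ together with the triangle inequality yield the stated lower bound on $|(\psi,\psi_j)|$.
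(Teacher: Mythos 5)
Your overall route is the same as the paper's: analyze $\psi$ on the slab $(y_j-2\eta,y_j+2\eta)$ where $f$ is piecewise constant, suppress the growing exponentials using $\|\psi\|=1$ (your Schur-complement bound is a welcome quantitative version of what the paper leaves implicit), evaluate $(\psi,\varphi_{j\eta})$, and transfer to $\psi_j$ via Corollary \ref{cor:jumps}. But there is a genuine gap at your central identification. You assert that the decaying mode $\psi_d^{(R)}$ of $\psi$ coincides on the slab with $\varphi_j(y)/v_j(y_j)$. This is true only if the eigenvalue $E$ of $\psi$ agrees (up to exponentially small errors) with the jump eigenvalue $\sE_j$ of Lemma \ref{lem:purejump}: the local modes of $\psi$ decay at the rates $\kappa_\pm^E=\sqrt{f_\pm^2+\xi^2-E^2}$ with polarization vectors evaluated at $E$, whereas $\varphi_j$ carries the rates and vectors at $\sE_j$. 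The lemma, however, is stated for every $E\in(0,|\xi|)$, and in its only application (proof of Theorem \ref{thm:jumps}) it is applied to branches whose eigenvalue differs from $E_j$, so that $(\psi,\psi_j)=0$ by self-adjointness. For such $E$ your overlap integrals are $\bigl(\kappa_\pm^E+\kappa_\pm^{\sE_j}\bigr)^{-1}$ rather than $(2\kappa_\pm)^{-1}$, the bracket no longer equals $\|\varphi_j\|^2/|v_j(y_j)|^2$, and the asserted equality $(\psi,\varphi_{j\eta})=\overline{v(y_j)/v_j(y_j)}+O(e^{-C_\eta|\xi|})$ is false in general; at best a one-sided inequality can survive, and it requires an argument you have not given.

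What is missing is the second matching condition at $y_j$: you impose only continuity of the third component, $a_R+b_R=v(y_j)=a_L+b_L$, but the domain of $\hat H(\xi)$ also forces continuity of $\eta$, and this is exactly what encodes the dichotomy the lemma rests on. For instance, with $f_\pm=\pm1$ and $\xi=-R\to-\infty$ (so $\sE_j\to1$), the $\eta$-components of the right and left decaying modes are $i(E-1)/R$ and $-i(E-1)/R$ to leading order, so $\eta$-continuity forces $(E-1)(a_R+a_L)=O(|b_R|+|b_L|)$; for $E$ bounded away from $\sE_j$ this yields $a_L\approx-a_R$ modulo the exponentially small growing amplitudes, and combined with $v$-continuity, $|v(y_j)|=O(\sqrt{|\xi|}\,e^{-C_\eta|\xi|})$. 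Thus either $E$ is essentially equal to $\sE_j$, where your computation is valid, or $v(y_j)$ is itself exponentially small, where the claimed inequality holds trivially. This dichotomy, not your exact overlap identity, is the actual content of the lemma, and it is what the paper's (admittedly terse) one-sided interpolation argument --- $|(\psi,\varphi_{j\eta})|\geq\lambda-Ce^{-C_\eta|\xi|}$ when $v(y_j)=\lambda v_{j\eta}(y_j)$ --- is meant to capture. Your proof is repairable: keep the mode decomposition and the bound on $b_R,b_L$, add the $\eta$-continuity equation, and split according to whether $E$ is close to $\sE_j$ or not. A secondary caveat, shared with the paper: your exponential rates implicitly require $\kappa_\pm^E\geq c|\xi|$, i.e., $E$ proportionally bounded away from $|\xi|$; near the sector boundary the modes decay only at rate $|f_\pm|$ and the constants degrade.
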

\begin{proof}
Since $\varphi_{j\eta}$ is supported in $(y_j-2\eta,y_j+2\eta)$, then $\psi(y)$ restricted to that interval is also given by the solution of Lemma \ref{lem:purejump}. Since $\psi$ and $\varphi_{j\eta}$ are normalized, $v(y_j)$ cannot be significantly larger than $v_{j\eta}(y_j)$. When these two quantities are equal, then we have $| (\psi, \varphi_{j\eta})| \geq 1 -C e^{-C_\eta |\xi|}$. Therefore, when $v(y_j)=\lambda v_{j\eta}(y_j)$ for $0\leq\lambda\leq1$, we have $| (\psi, \varphi_{j\eta})| \geq \lambda -C e^{-C_\eta |\xi|}$. The above estimate then follows from corollary \ref{cor:jumps} replacing $\varphi_{j\eta}$ by $\psi_j$. 
\end{proof}

% \gb{No, I do not agree with this. $\psi$ and $\psi_j$ solve different equations and so we never have equality of the inner products to the ratio of the $v$'s. We need $\varphi_{j\eta}$. I slightly modified the derivation above. Please have a look.}

% \ty{I see.}

\begin{theorem} \label{thm:jumps}
  Let $f$ be as above with a finite number $J$ of jumps. Then in the sector $0<E(\xi)<|\xi|$, there are exactly $J$ branches of absolutely continuous spectrum of the equation $(\hat H(\xi)-E(\xi)) \psi=0$ converging to non-vanishing limits 
\[
     E_j(\xi) \to | f_{jo}| \ \mbox{ as } \ \left\{  \begin{array}{ccl} \xi\to-\infty & \mbox{ when } & f_{jo}>0,  \\ \xi\to \infty & \mbox{ when } & f_{jo}<0. \end{array} \right.
\]
All other branches of the equation converge to $0$ as $|\xi|\to\infty$.  
\end{theorem}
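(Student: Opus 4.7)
My plan is to split the argument into two halves: first show existence and distinctness of the $J$ announced branches via Corollary \ref{cor:jumps}, then establish completeness (no other non-vanishing branch in the sector) by a global ODE decay estimate combined with the local jump analysis of Lemma \ref{lem:purejump}.

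For existence, I would first observe that the quasi-eigenvalue of Lemma \ref{lem:purejump} at jump $y_j$ is $\sE_j(\xi)=-\xi f_{jo}/\sqrt{f_{je}^2+\xi^2}$, which is positive precisely when $-\xi f_{jo}>0$, i.e.\ $\xi\to-\infty$ if $f_{jo}>0$ and $\xi\to+\infty$ if $f_{jo}<0$; a direct computation gives $\sE_j(\xi)\to|f_{jo}|$ in those limits. Corollary \ref{cor:jumps} then supplies genuine eigenvalues $E_j(\xi)$ of $\hat H(\xi)$ with $|E_j-\sE_j|\leq C e^{-C_\eta|\xi|}$, which by analyticity of the discrete spectrum away from $E=0$ extend to full branches. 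To see that the $J$ branches are distinct, I would use that the quasi-modes $\psi_{j\eta}$ have mutually disjoint supports, so the Galerkin matrix $(\psi_{j\eta},\hat H(\xi)\psi_{k\eta})$ equals ${\rm Diag}(\sE_1,\dots,\sE_J)$ up to $O(e^{-C_\eta|\xi|})$ corrections, yielding $J$ separate eigenvalues of $\hat H(\xi)$ in a $C e^{-C_\eta|\xi|}$ neighborhood of $\{\sE_j\}$.

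For completeness, I would take a normalized eigenvector $\tilde\psi$ with eigenvalue $\tilde E\in(0,|\xi|)$ that does not tend to $0$ along some sequence $\xi_n\to\infty$. The third component $\tilde v$ solves $L_\mu \tilde v=(\tilde E^2-\xi^2)\tilde v$ with $\mu=\xi/\tilde E$, i.e.\ $(-\partial_y^2+f^2+\mu f'+\xi^2-\tilde E^2)\tilde v=0$. Away from the jumps, where $f\in C^1$ by assumption, the potential is bounded below by $\xi^2/2$ for $|\xi|$ large, because $|\mu f'|\leq(|\xi|/c)\|f'\|_\infty$ is of lower order than $\xi^2$; a standard Agmon estimate then forces $\tilde v$ to decay exponentially at rate $\gtrsim |\xi|$ between jumps, and by \eqref{eq:etau} the same applies to $\tilde u$ and $\tilde\eta$. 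Therefore the mass of $\tilde\psi$ concentrates in $O(|\xi|^{-1})$ neighborhoods of the $y_j$. Inside each piecewise-constant window $(y_j-2\eta,y_j+2\eta)$, $\tilde\psi$ satisfies exactly the single-jump equation of Lemma \ref{lem:purejump}, and the exponentially small boundary data at $y_j\pm2\eta$ force $\tilde\psi$ to agree with the Lemma \ref{lem:purejump} ansatz up to exponentially small error; the uniqueness statement in that lemma then forces $\tilde E\approx \sE_j(\xi)$ whenever $\tilde v(y_j)$ is not itself exponentially small. Normalization guarantees that $|\tilde v(y_j)|$ is bounded below at some $j$, and Lemma \ref{lem:smallinner} converts this into $|(\tilde\psi,\psi_j)|\geq c_0>0$; self-adjointness and discreteness of the spectrum then identify $\tilde E$ with the corresponding $E_j$ modulo $Ce^{-C_\eta|\xi|}$, and analyticity upgrades this to equality of branches.

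The hard part will be the Agmon-type decay estimate for $\tilde v$ between jumps, because the coefficient $\mu f'$ grows linearly in $|\xi|$ and must be controlled uniformly in $\tilde E$ bounded below; the subsequent local-to-global matching near each $y_j$ then reduces to a fairly standard quasi-mode argument and is more bookkeeping than innovation. A secondary technical point is verifying that normalization really does force $|\tilde v(y_j)|$ (rather than $|\tilde u(y_j)|$ or $|\tilde\eta(y_j)|$) to be bounded below somewhere, so that Lemma \ref{lem:smallinner} can be applied; this follows from \eqref{eq:etau} after observing that $\xi^2-\tilde E^2$ stays uniformly away from zero in the sector under consideration.
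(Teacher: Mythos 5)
Your existence half is essentially the paper's own argument: the quasi-eigenvalues $\sE_j(\xi)\to|f_{jo}|$ from Lemma \ref{lem:purejump}, promoted to true eigenvalues by Corollary \ref{cor:jumps}; your disjoint-support Galerkin remark is a reasonable way to make ``exactly $J$'' precise where the paper simply declares this ``clear from Corollary \ref{cor:jumps}'' (relying on the previously established simplicity of branches). The divergence is in the completeness half, where the paper uses neither Agmon decay nor local matching. Instead it writes the quadratic form of \eqref{eq:v} with $f'$ understood distributionally, which picks up boundary terms at the jumps,
\[
\|\fa v\|^2+(\mu+1)(f'v,v)+\sum_{j=1}^J(\mu+1)[f]_j\,|v(y_j)|^2=(E^2-\xi^2)\|v\|^2,
\]
and then invokes Lemma \ref{lem:smallinner} in the \emph{opposite} direction from yours: for any branch other than the $J$ identified ones, eigenvector orthogonality $(\psi,\psi_j)=0$ forces $|v(y_j)|\leq C|\xi|^{1/2}e^{-C_\eta|\xi|}\|v\|$, so the jump terms are negligible and the smooth-case estimate $E(|\xi|+E)\lesssim\|f'\|_\infty$ applies verbatim, giving $E\to0$ (with $\|\fa^*v\|$ for the limit $\xi\to+\infty$). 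This one identity covers the entire sector uniformly and in three lines.

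Your route, by contrast, has a genuine gap precisely where uniformity matters. The Agmon step asserts the potential $f^2+\mu f'+\xi^2-\tilde E^2$ is bounded below by $\xi^2/2$ between jumps because $|\mu f'|=O(|\xi|)$ --- but this silently discards the $-\tilde E^2$ term, which is only legitimate when $\tilde E\leq(1-\epsilon)|\xi|$. A branch ``not tending to $0$'' could a priori hug the sector boundary, $\tilde E(\xi_n)/|\xi_n|\to1$ (possibly with $\tilde E\to\infty$), and the theorem must exclude such branches too, since it asserts \emph{all} other branches converge to $0$. In that regime your decay estimate fails, and so does your secondary claim that $\xi^2-\tilde E^2$ ``stays uniformly away from zero in the sector'' (it vanishes at the boundary), which you need both to transfer decay from $\tilde v$ to $(\tilde\eta,\tilde u)$ via \eqref{eq:etau} and to guarantee that normalization lower-bounds $|\tilde v(y_j)|$ rather than inflating $(\tilde\eta,\tilde u)$. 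So as written your completeness argument only handles branches with $\tilde E$ bounded and bounded away from $|\xi|$, and even there the local matching step (``exponentially small boundary data force agreement with the Lemma \ref{lem:purejump} ansatz, hence $\tilde E\approx\sE_j$'') is a nontrivial Rouch\'e-type solvability argument you have only sketched. All of this machinery is avoidable: the variational identity with jump boundary terms, which your proposal never derives, is the missing key device.
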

\begin{proof}
  The result on the convergence of the branches is clear from Corollary \ref{cor:jumps}. 
  
  From the variational formulation of the equation for $v_j(y)$, we obtain that
  \[
   \|\fa v\|^2 + (\mu+1) (f'v,v) + \sum_{j=1}^J (\mu+1) [f]_j |v(y_j)|^2= (E^2-\xi^2) \|v\|^2,
  \]
  where $(f'v,v)$ involves the bounded part $f'$ of the derivative of $f$ and $[f]_j=2 f_{jo}$. When $\xi<0$, we obtain as earlier that
  \[
  (\xi^2-E^2)\|v\|^2  \leq \frac{|\xi+E|}E \Big( \|f'\|_\infty  \|v\|^2 + \sum_{j=1}^J [f]_j |v(y_j)|^2\Big) .
\]
From Lemma \ref{lem:smallinner}, we obtain that $|v(y_j)|\leq C |\xi|^{\frac12} e^{-C_\eta |\xi|}\|v\|$ from the orthogonality $(\psi,\psi_j)=0$ and the fact that $\|v_j\|$ is of order $O(1)$ uniformly in $|\xi|$ sufficiently large. This implies again that $E(\xi)\to0$ as $\xi\to-\infty$. The variational formulation involving $\|\fa^*v\|$ instead provides the convergence when $\xi\to\infty$.
\end{proof}

\medskip

We can finally obtain the proof of our main theorem.
\begin{proof}[Proof of Theorem \ref{thm:SF}]
 We know that $2\pi\sigma_I$ is given by the spectral flow of $\hat H(\xi)$. 
 As a corollary of Theorem \ref{thm:jumps}, all branches associated to jumps converge to an element in $\mE_L$ as $\xi\to-\infty$ or to an element in $\mE_R$ as $\xi\to+\infty$. These branches have to converge to $E=0$ as $\xi\to0$ (or reach $0$ at a point $x_0$) as they remain in the sectors $0<E<|\xi|$. The spectral flow of these branches is then equal to $-1$ for $j$ in $\mJ_L(E)$ while it is given by $+1$ for $j\in \mJ_R(E)$.
 
 The branch $E(\xi)=\xi$ always contributes $1$ to the spectral flow. We also know that branches do not cross as they are simple. As a consequence, the only branch crossing the half-line $0<E=-\xi$ converges to $+\infty$ as $\xi\to\infty$ and converges to either the largest element in $\mE_L$ or to $0$ as $\xi\to-\infty$ when $\mE_L$ is empty. This Yanai branch has a spectral flow equal to $1$ when $\alpha=E$ is larger than the latter limit and $0$ otherwise.
\end{proof}

\paragraph{Discreteness of the spectrum of $\hat H(\xi)$.}  We finally prove the following result:

\begin{lemma}\label{lem:discrete} Assume that $f(y)$ is a locally bounded function with unbounded range. We also assume that $f(y)$ is sufficiently smooth between a finite number of point $y_j$ where it may be discontinuous.

Then $\hat H(\xi)$ has purely discrete spectrum away from $E=0$.

For $\xi=0$, then $\hat H(0)$ has essential (point) spectrum at $0$. 

For $\xi\not=0$, then $\hat H(\xi)$ has essential spectrum at $0$ only when the operator of multiplication by $f'(y)$ has essential spectrum (i.e., $f'(y)=0$ on an open set).

This allows us to define branches of absolutely continuous spectrum of $H$. Moreover, the branches $\xi\mapsto E_n(\xi)$ for $n\in\Zm$ are real analytic.
\end{lemma}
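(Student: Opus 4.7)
My plan is to reduce the spectral problem for $\hat H(\xi)$ to a one-dimensional Sturm-Liouville operator and exploit its compact resolvent. For $E \ne 0$ with $E \ne \pm\xi$, the $2\times 2$ block in \eqref{eq:etau} is invertible, so the first two rows of $(\hat H(\xi) - E)\psi = 0$ determine $(\eta, u)$ from $v$, and substitution into the third row yields $L_{\xi/E} v = (E^2 - \xi^2) v$ with $L_\mu = -\partial_y^2 + f^2 + \mu f'$, together with $v \in C^0$ and the jump relations $[v'](y_j) = (\xi/E)[f](y_j) v(y_j)$ at each discontinuity of $f$. I realize $L_\mu$ as the self-adjoint operator generated by the closed quadratic form
\[
q_\mu(v) = \|v'\|^2 + \|fv\|^2 + \mu\int f'|v|^2\,dy + \mu \sum_j [f](y_j)|v(y_j)|^2,
\]
on the form domain $\{v \in H^1(\Rm) : fv \in L^2(\Rm)\}$; the jump boundary terms are relatively form-bounded with relative bound zero via the one-dimensional Sobolev trace $H^1 \hookrightarrow C^0$. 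Because $f^2(y) \to \infty$ as $|y| \to \infty$ in the setting of the paper, this form domain embeds compactly into $L^2(\Rm)$, so $L_\mu$ has purely discrete spectrum accumulating only at $+\infty$.

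Discreteness of $\hat H(\xi)$ away from $\{0, \pm\xi\}$ then follows by a Weyl-sequence argument. Suppose $\psi_n = (\eta_n, u_n, v_n)$ satisfies $\|\psi_n\| = 1$, $\psi_n \rightharpoonup 0$, and $(\hat H(\xi) - E_0)\psi_n \to 0$ with $E_0 \notin \{0, \pm\xi\}$. The three rows immediately give $\|D_y v_n\|$ and $\|fv_n\|$ bounded, so $v_n \to 0$ strongly in $L^2$ by the compact embedding. Testing the reduced equation $(L_{\xi/E_0} - (E_0^2 - \xi^2))v_n = h_n$ against $v_n$, with the derivative contributions in $h_n$ handled by duality (pairing $D_y g_n$ terms with $D_y v_n$ rather than bounding derivatives of $g_n$ directly), yields $\|D_y v_n\|, \|fv_n\| \to 0$; the explicit formulas \eqref{eq:etau} then force $\eta_n, u_n \to 0$, contradicting $\|\psi_n\| = 1$. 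The resonance lines $E = \pm \xi$ are isolated by the direct analysis already performed in Section \ref{sec:results}: $E = \xi$ carries only the one-dimensional Kelvin eigenspace spanned by $(1, 1, 0)^T e^{-F(y)}$, and $E = -\xi$ meets at most the Yanai branch at the single point \eqref{eq:Ecrossing}.

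For $E = 0$: at $\xi = 0$ the equation $\hat H(0)\psi = 0$ forces $v \equiv 0$ and $\eta'(y) = -f(y) u(y)$, so any compactly supported $u \in L^2$ with $\int f u \, dy = 0$ produces an $L^2$ solution $\eta(y) = -\int_{-\infty}^y f u\, dz$; this infinite-dimensional kernel is the essential point spectrum at $0$. At $\xi \ne 0$ and $E = 0$, eliminating $(\eta, u)$ from the equations yields the algebraic constraint $f'(y) v(y) = 0$, so a Weyl sequence at $E = 0$ reduces to a normalized sequence $(v_n)$ with $f' v_n \to 0$ in $L^2$, which exists if and only if $0$ lies in the essential spectrum of the multiplication operator $f'$. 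Real analyticity of the branches follows from Kato: $\xi \mapsto \hat H(\xi)$ is linear in $\xi$, and each nonzero eigenvalue is simple by the Wronskian argument of Section \ref{sec:results}, so \cite[Chapter VII.1.1]{Kato66} gives real analyticity of $\xi \mapsto E_n(\xi)$ wherever $E_n(\xi) \ne 0$. The main technical obstacle is that $\hat H(\xi)$ itself does not have compact resolvent: the $(1,1)$ block of $\hat H(\xi)^2$ equals $\xi^2 - \partial_y^2$, which does not confine the $\eta$-component; the scalar reduction to $L_\mu$ circumvents this, at the cost of the separate treatment of the resonance lines $E = \pm \xi$.
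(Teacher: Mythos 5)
Your route is essentially the paper's own: reduce $(\hat H(\xi)-E)\psi=0$ via \eqref{eq:etau} to the Sturm--Liouville problem for $L_\mu=-\partial_y^2+f^2+\mu f'$, use the confinement $f^2\to\infty$ to get compactness, run a Weyl-sequence argument to push discreteness back up to the $3\times3$ operator, treat $E=0$ exactly as the paper does (infinite-dimensional kernel of $\partial_y\eta+fu=0$ at $\xi=0$; the constraint $f'v=0$ at $\xi\neq0$), and invoke Kato plus simplicity for analyticity. Where you differ, you are in fact more careful than the paper: the KLMN realization of $L_\mu$ with the distributional jump terms $\mu[f](y_j)\delta_{y_j}$ made relatively form-bounded with bound zero by the $H^1$ trace inequality, and the duality pairing of the $D_y g_n$ source terms against $D_y v_n$, are details the paper dispatches with ``as one easily verifies by elliptic regularity.'' Your closing remark that $\hat H(\xi)$ itself lacks compact resolvent (the $(1,1)$ block of $\hat H(\xi)^2$ being $\xi^2-\partial_y^2$) correctly identifies why the scalar reduction is unavoidable.

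There is, however, one genuine gap: your treatment of the resonance lines $E=\pm\xi$ for $\xi\neq0$. You dispose of them by citing the eigenfunction analysis of section \ref{sec:results} (Kelvin eigenspace one-dimensional at $E=\xi$; Yanai crossing at the single point \eqref{eq:Ecrossing}). But that analysis constrains only the \emph{point} spectrum and its multiplicity; it does not exclude $\lambda=\pm\xi$ from the \emph{essential} spectrum, which could a priori arise from a singular Weyl sequence that is not an eigenvector, or from discrete eigenvalues $E_n(\xi)$ accumulating at $\pm\xi$ --- and to assert ``purely discrete spectrum away from $E=0$'' you must rule this out at these two points, precisely where your $2\times2$ elimination degenerates. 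The paper closes this with a dedicated argument: at $\lambda=\xi$, adding the first two rows of $(\hat H(\xi)-\xi)\psi_k=g_k$ gives $i\fa^*v_k\to0$, and since $\fa^*$ is injective with bounded inverse this forces $v_k\to0$ (and then $v_k'\to0$, $fv_k\to0$); consequently $\eta_k-u_k\to0$ and $\fa(\eta_k+u_k)\to0$, and the Fredholm property of $\fa$ (one-dimensional kernel spanned by $e^{-F}$) forces $\eta_k+u_k$ to lie within $o(1)$ of that kernel, producing a convergent subsequence and contradicting the singularity of the Weyl sequence; the case $\lambda=-\xi$ is symmetric, with $\fa v_k\to0$ pinning $v_k$ to the kernel of $\fa$ and $\eta_k+u_k\to0$. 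Your proof needs this (or an equivalent) supplementary step; with it inserted, the rest of your argument stands.
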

\begin{proof}
We assume that $f(y)$ has unbounded range. This implies that the operator $(-\partial^2_y+f^2(y) + \mu f'(y))$ has compact resolvent \cite{teschl2009mathematical}. The discreteness of the spectrum of $\hat H(\xi)$ for $\xi\not=0$ depends on this property and the Weyl criterion for essential spectrum.

Let $\xi\in\Rm$ be fixed and assume that $|\lambda|\not=|\xi|$ for $\lambda\in\Rm\backslash\{0\}$. Assume that $\lambda \in \sigma_{\rm ess}(\hat H(\xi))$. Then there is a Weyl singular sequence $\psi_k$ such that $\|\psi_k\|=1$, $(\hat H-\lambda)\psi_k\to0$ in the $L^2$ topology, and the $\psi_k$ have no convergent subsequence (and hence can be chosen orthonormal).

Define $(\hat H-\lambda)\psi_k=g_k$. Since $|\lambda|\not=|\xi|$, we can eliminate $(\eta,u)$ as performed earlier in \eqref{eq:etau} and get that for $\mu=\xi/E$,
\[
  L_\xi v_k := (-\partial^2_y + f^2(y) + \mu f'(y) + \xi^2-\lambda^2) v_k = s_k
\]
where $s_k$ involves the source terms $g_k$ possibly multiplied by $f(y)$ or differentiated once. There cannot be an accumulation point for $v_k$ for otherwise $(\eta_k,u_k)$ would also converge along that subsequence and the original $\psi_k$ would then have an accumulation point, which is impossible.

And yet, from the above equation, the compactness of the resolvent operator of the Sturm-Liouville operator $L_\xi$ \cite{teschl2009mathematical} and the compactness of that resolvent operator composed with $\partial_y$ or with multiplication by $f(y)$, as one easily verifies by elliptic regularity, implies that $v_k$ belongs to a compact subset of $L^2(\Rm)$ and hence has an accumulation point. This implies that $\hat H(\xi)$ only has discrete spectrum away from $\pm \xi$.

Let us now assume that $\xi\not=0$ and that $\lambda=\xi$. This implies $i\fa^* v_k$ converges to $0$ so that $v_k$ converges to $0$. We also observe that $\eta_k-u_k$ converges to $0$ and that $\eta_k$ and $u_k$ then converge to the unique normalized solution of $\fa u=0$. Again, this implies that $\lambda=\xi$ is not in the essential spectrum of $\hat H$. When $\lambda=-\xi$, we similarly obtain that $v_k$ converges to the solution of $\fa v=0$. This implies that $\eta_k+u_k$ converges as well when $\xi\not=0$ while $\eta_k+u_k$ has to converge to $0$.

It thus remains to analyze the case $\lambda=0$. When $\xi=\lambda=0$, we observe that $\hat H(0)$ does have essential spectrum at $\lambda=0$. We already saw that $\hat H(\xi)$ had discrete spectrum away from $\lambda=0$. When $\lambda=0$, we observe that $v_k\to0$. Any solution of $\partial_y \eta_k+ f(y) u_k=0$ generates a singular Weyl sequence, and this space of solutions is clearly infinite.

When $\lambda=0$ and $\xi\not=0$, we observe that the elimination of $(\eta,u)$ yields $f'(y)v_k\to0$. When $f'(y)=0$ on an open set, then there is an infinite number of orthogonal solutions to the equation $f'(y)v_k=0$ and hence the presence of singular (pure point) spectrum. Alternatively, when $f'(y)$ vanishes only on a discrete set of points, then $v_k\to0$ and hence so do $\eta_k$ and $u_k$ when $\xi\not=0$. In this case, $\hat H(\xi)$ has no essential spectrum (at $\lambda=0$ or anywhere else). For the same reason, it also has no point spectrum at $\lambda=0$ in the latter case since $\hat H(\xi)\psi=0$ with $\|\psi\|=1$ implies $\psi=0$.

The above results allow us to define the branches $\xi\mapsto E_n(\xi)$ for $n\in\Zm$ starting with the branch $E_0(\xi)=\xi$, say. Since $\xi\mapsto\hat H(\xi)$ is clearly real analytic and the branches are one-dimensional, we deduce from \cite[Chapter VII.1.1]{Kato66} that away from $E=0$, the branches are indeed real analytic. We already saw that by simplicity, the branches could not cross either except possibly at $E=\xi=0$, where they do cross.
\end{proof}
%
%%%
\section{Numerical simulations} \label{sec:num}
%%%

We now present an algorithm and its implementation allowing us to approximate the branches $E_n(\xi)$. This requires truncating the computational domain to a support $y\in[-L,L]$. We then impose periodic boundary conditions, in other words solve each problem on a one-dimensional torus (circle). We also impose that $f(y)$ is appropriately modified so that it does not generate any jump in the vicinity of $y=L$. The resulting ordinary differential equation is then discretized by a standard finite difference scheme. In the simulations presented in this paper, we chose $L=11$ while the number of discretization points is equal to $m=5001$.

We face two main difficulties in discretizing $\hat H(\xi)$ in \eqref{eq:hatH}. The first one is that the periodization of the domain and hence of the profile $f(y)$ creates a domain wall in the vicinity of $y=L$ with opposite direction to that of the domain wall created near $y=0$. This form of Fermion doubling is well known \cite{delplace2017topological,zhu2023topology}. The spectrum of the periodized operator is therefore totally different from that of the initial operator $\hat H(\xi)$. A second difficulty is that spurious numerical modes with rapid oscillations, e.g., with wavenumber comparable to $m$, inevitably appear in the spectrum of the discrete operator. Such high wavenumber spurious numerical modes typically correspond to large eigenvalues. However, as seen in Fig. \ref{fig:1}, $\hat H(\xi)$ has both large and small eigenvalues that are not well separated spectrally from the spurious numerical modes. 

\begin{figure}[ht!]
\begin{center}
  \includegraphics[width=7.5cm]{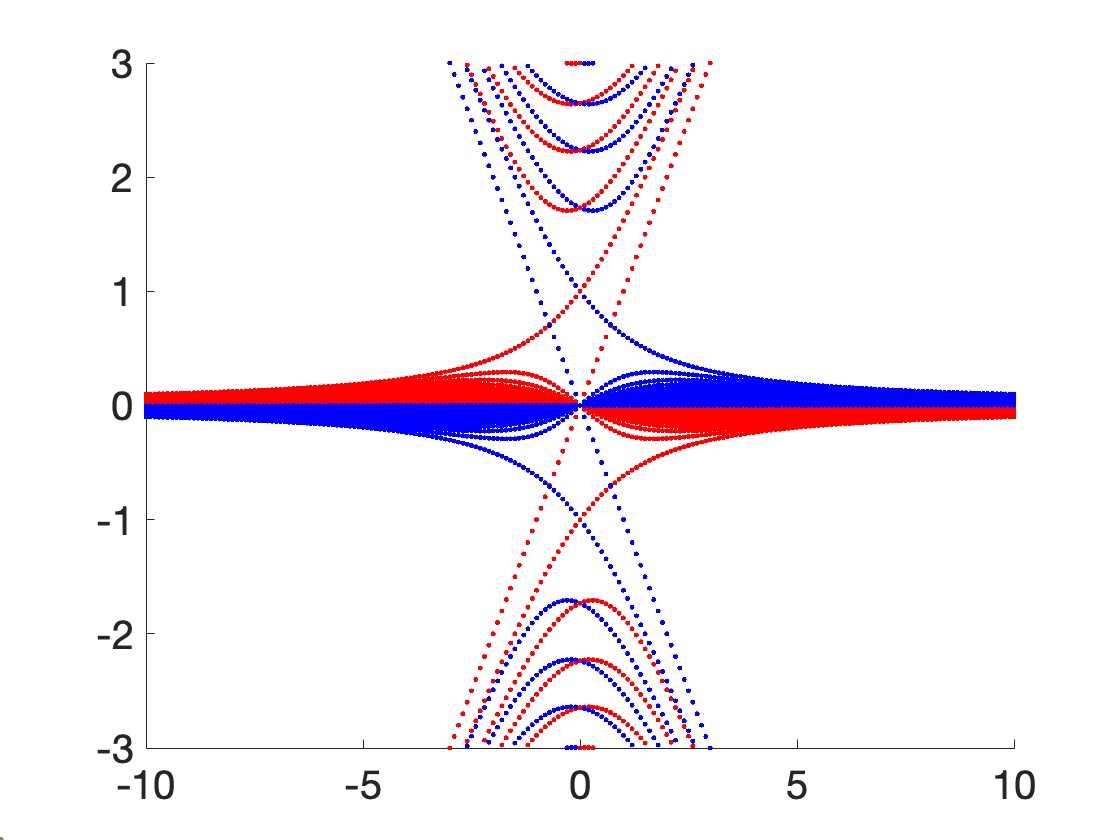} \hspace{.1cm} \includegraphics[width=7.5cm]{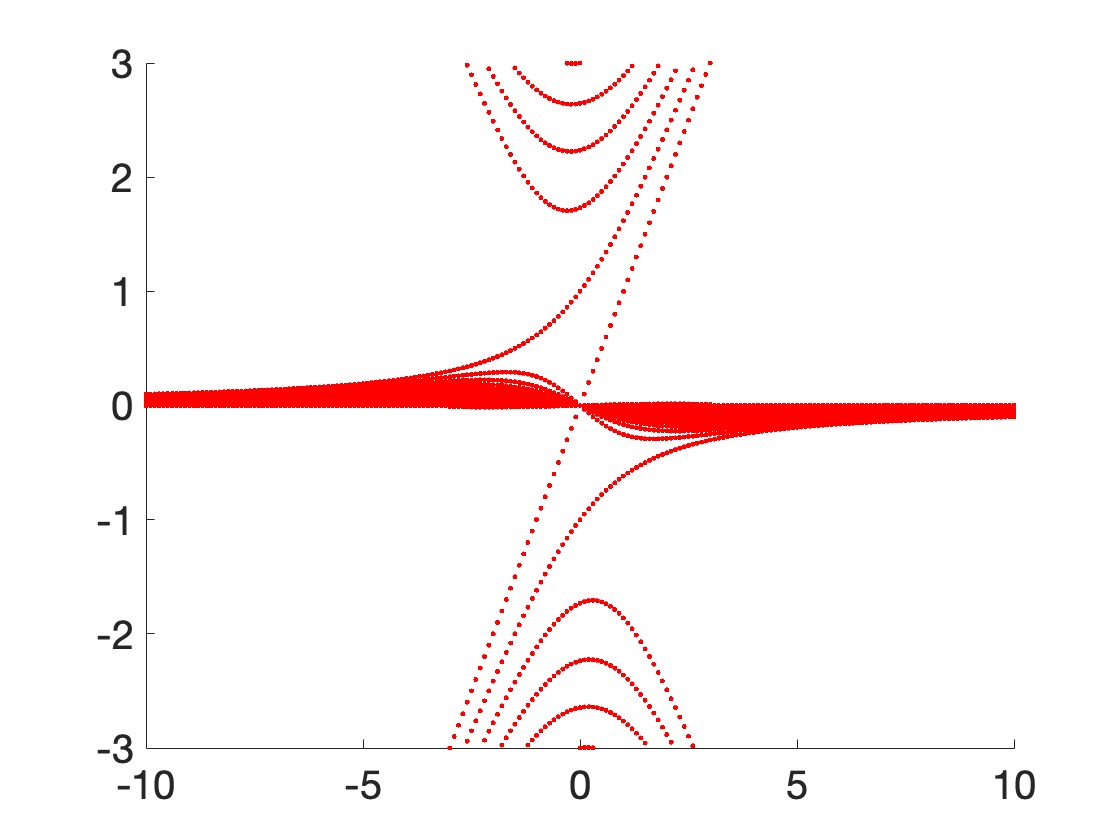}
\end{center}
\caption{Numerical simulation of the spectrum of $H$ for $f(y)=y$. Left: full numerical spectrum without elimination of eigenvalues. Right: spectrum after elimination of eigenvectors concentrating near the domain wall $y=L$. 
%\tb{Are some highly oscillatory modes also removed left or right?}
}
\label{fig:2}
\end{figure}

%\gb{What I mean is that we state that modes are removed for two different reasons: they concentrate on the wrong side or they are too oscillatory. I just wanted an idea of the number of modes removed by each of these filters. Above figures look quite nice.} 

%\ty{I see. The answer is that actually most removed points are in fact due to the filter on the highly oscillatory part, i.e. for the $f=y$ plot, if I only use the oscillatory filter then I get the good graph (Right), but if I only use the periodization filter then I get everything except the origin (Everything Left execpt 0).}

These two problems are remedied by analyzing all numerically computed eigenvectors $\psi(y)$. The eigenvectors concentrating more in the vicinity of $y=L$ than they do in the vicinity of $y=0$ are eliminated.  In practice, we keep eigenvectors when the $l^2$ norm squared of the discrete vector on $(-\frac9{10}L,\frac{9}{10}L)$ is more than half the $l^2$ norm squared on the whole interval $(-L,L)$. Spurious numerical modes are also eliminated when their discrete Fourier transform satisfies that the $l^2$ norm squared on one fifth of the frequency range is at least $9/10$ of the whole $l^2$ norm squared.

Fig. \ref{fig:2} displays the results of numerical simulations for the profile $f(y)=y$. The left panel shows the numerical spectrum of the periodized operator, which becomes approximately (but not exactly) symmetrical against the transformation $\xi\to-\xi$. The spectral flow of the periodized operator clearly vanishes. The right panel displays the spectrum after filtering of the spurious modes generated by the periodization. 

\begin{figure}[ht!]
\begin{center}
  \includegraphics[width=7.7cm]{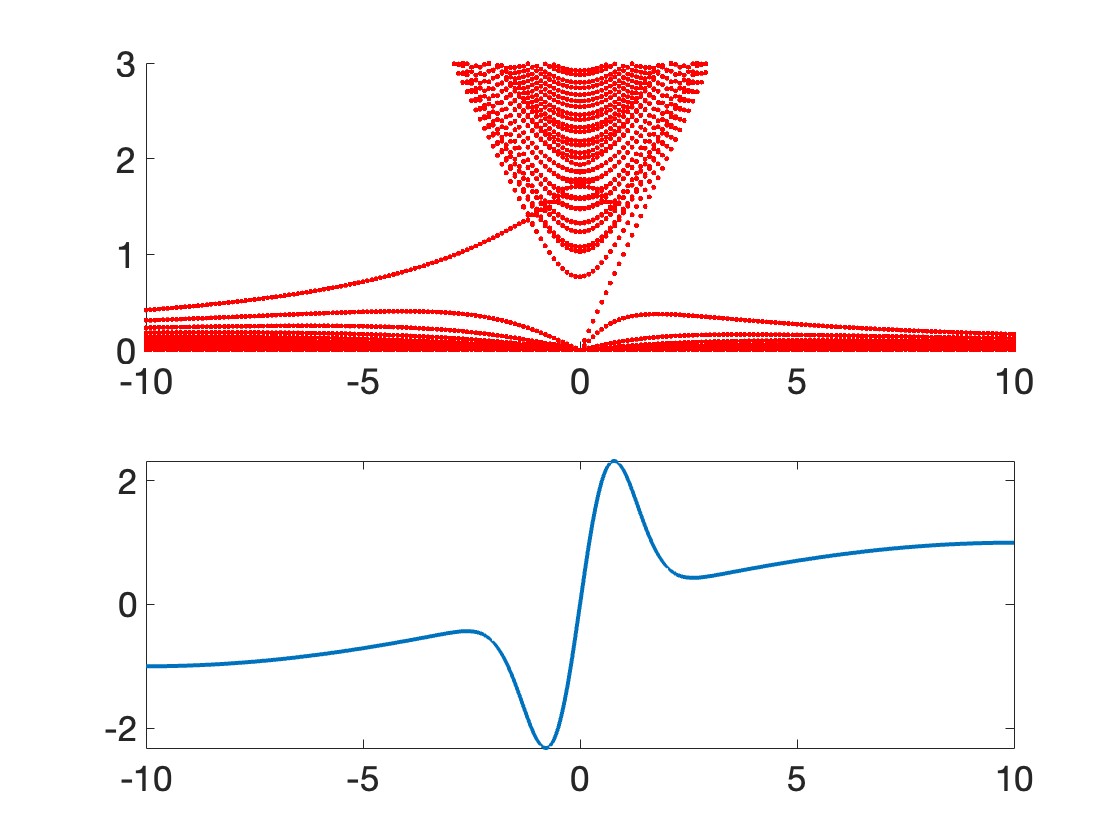}  %\hspace{1cm}
  \includegraphics[width=7.7cm]{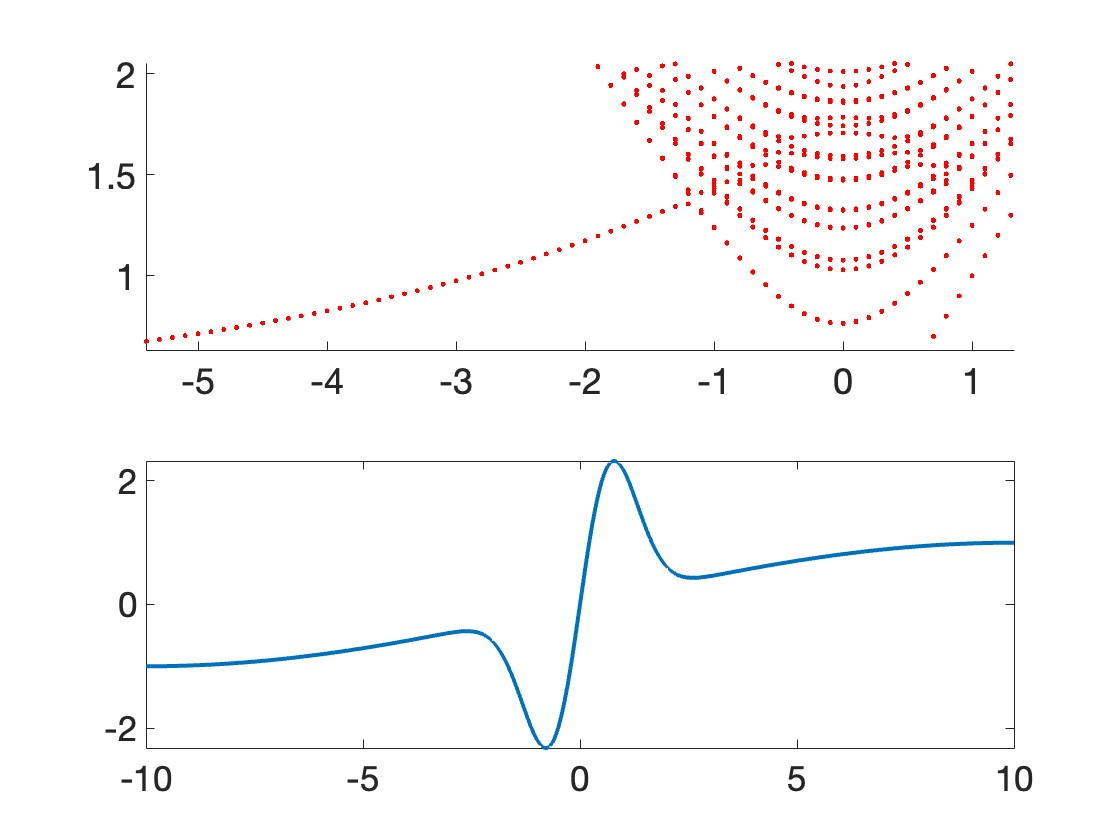}
\end{center}
  \caption{Left: spectrum of $H$ when $f$ is the profile displayed in the bottom panels. The branches converge to $0$ as $|\xi|\to0$ as expected from Theorem \ref{thm:smooth}. The zoomed-in right panel shows that even though branches almost touch, they do not cross as expected from their theoretical simplicity.}
\label{fig:3}
\end{figure}
Figure \ref{fig:3} shows the (filtered) spectrum of a smooth Coriolis parameter displaying large oscillations. As expected from the theory, the spectral flow of such an operator equals $2$ for any $\alpha\not=0$.

\begin{figure}[ht!]
%\begin{center}
%  \includegraphics[width=8cm]{3 jump result.png}
%\end{center}
%%\ty{A better plot
\begin{center}
  \includegraphics[width=7.7cm]{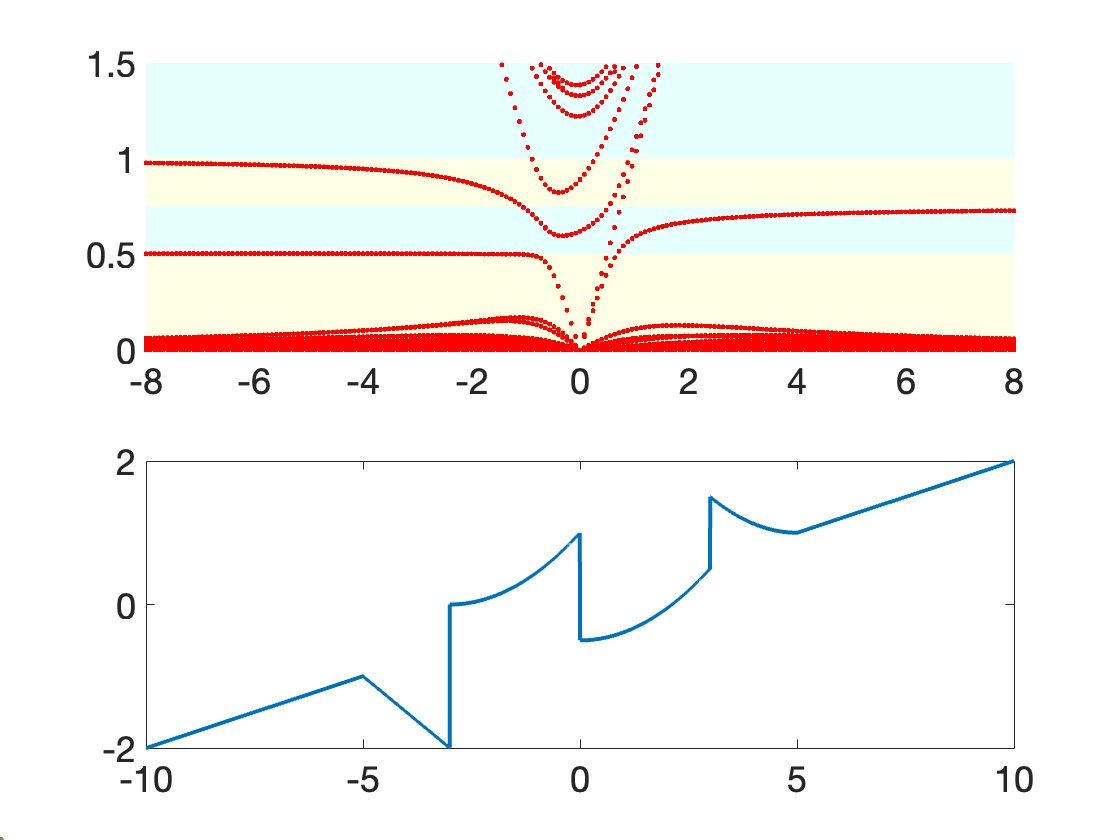}\hspace{.1cm}
  \includegraphics[width = 7.7cm]{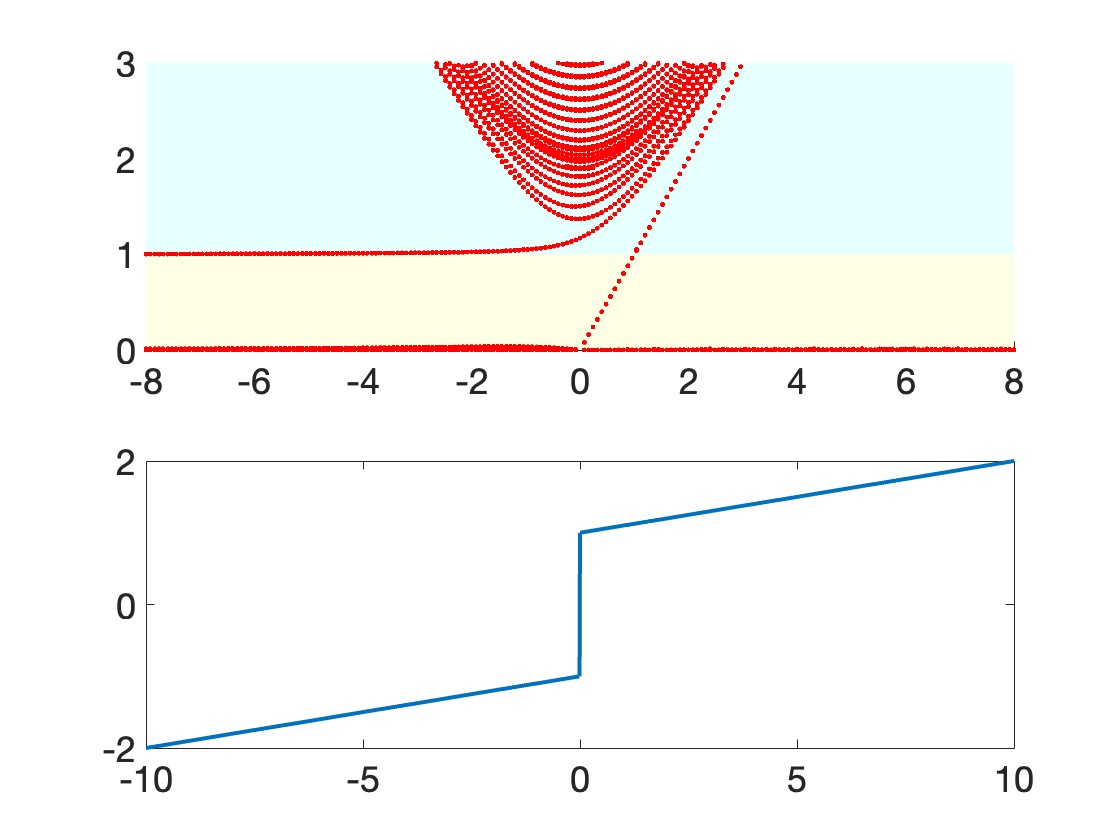}
\end{center} %%}
\caption{Left: profile with two positive jumps $+2$ and $+1$ and one negative jump $-3/2$.  Right: profile with one positive jump of size $+2$.
The filtered numerical spectra are consistent with the results of in Theorem \ref{thm:SF}.}
\label{fig:4}
\end{figure}

%\gb{We need a simulation with $f$ varying wildly but still smooth so we can see branches going to $0$.}

%\ty{Should we mention the extra detection algorithm? I'm also running the case where the jump is shifted and hopefully that doesn't require an extra test.} \gb{What is the extra detection algorithm?}\ty{If I compute directly the spectrum with the 2 tests we have, it would end up with a little piece on the $E = -\xi$ axis, as I showed you last time. This plot on the right is a plot where I deleted points that $E\in[-\xi-0.01, -\xi+0.01]$. Below is another plot with the jump at $1$ that does not need the extra machinery (the slope is a little bit different but I'm reproducing it):} \gb{OK. I don't think we really need to mention this as part of the filtering process.}
%\begin{center}
%\includegraphics[width = 7cm]{images/m = 2001, linear jump-shift-2.jpg}    \includegraphics[width = 7cm]{images/m = 2001, linear jump-shift-3.jpg}   \end{center}
%\ty{Above are two plots for the jump shifted to -1 and -0.3, they require no extra filtering.}
As an illustration of the violation of the BEC, we consider the profile displayed on the left in Fig. \ref{fig:4}. In that setting, we observe that $2\pi\sigma_I(\alpha)$ is given by $1$ for $0<\alpha<1/2$ or $3/4<\alpha<1$  (i.e.,  $\varphi'$ supported in the yellow region in Fig. \ref{fig:4}) and by $2$ for $1/2<\alpha<3/4$ or $\alpha>1$ ($\varphi'$ supported in the blue region). Only in the latter case is the BEC satisfied.

The simulation on the right of Fig. \ref{fig:4} shows a spectral flow equal to that in Fig. \ref{fig:1}, for a profile linear profile, except for the presence of one jump. The spectral flow equals $1$ for $0<\alpha<1$ and equals $2$ for $\alpha>1$. Note the absence of spectrum in the sectors $0<E<\xi$ as expected from theory since $f'(y)>0$ for $y$ away from points of discontinuity.

%\gb{Let us have two more simulations with $f'=1$ except where it jumps and a setting with two separated jumps of size 1 compared to a single jump of size 2 (or something of the sort). That should be enough then.}
%
%\ty{The example where two branches have the same asymptotic, namely jump size $+2,+2,-1$(Left) and jump size $-1,-1$ (Right)}
\begin{figure}[ht!]
\begin{center}
        \includegraphics[width = 7.5cm]{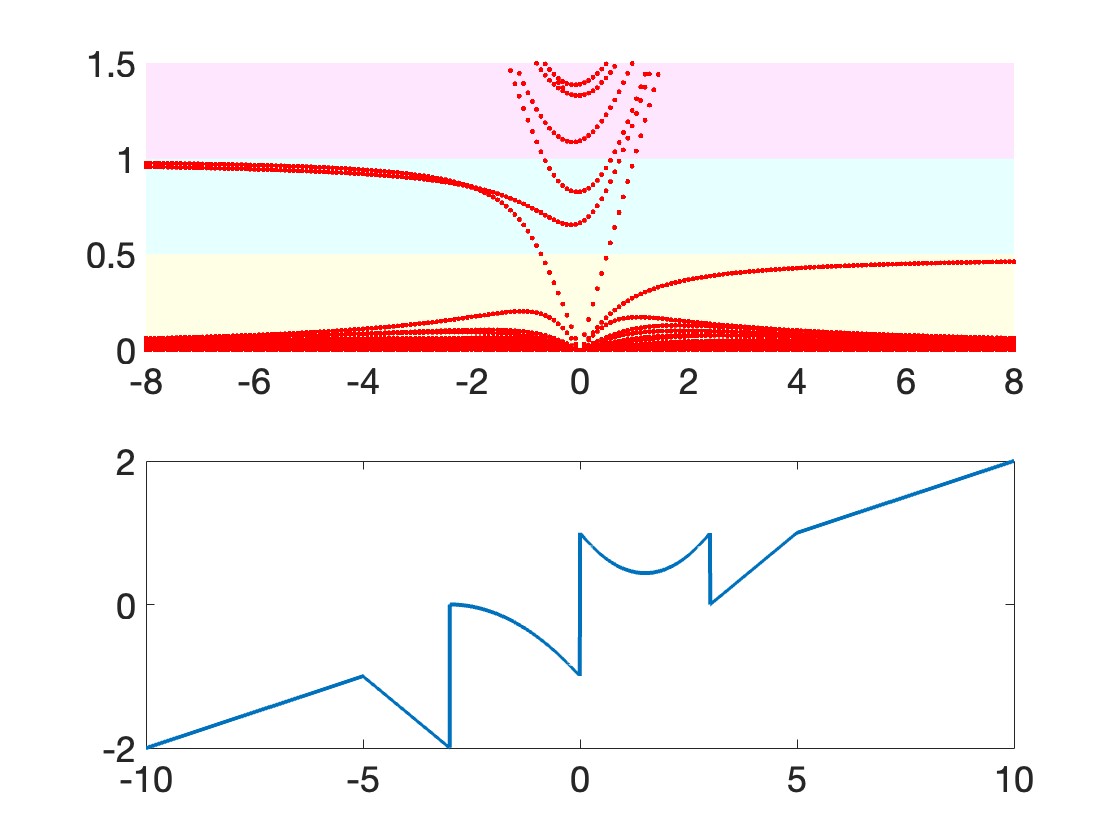}
     \includegraphics[width = 7.5cm]{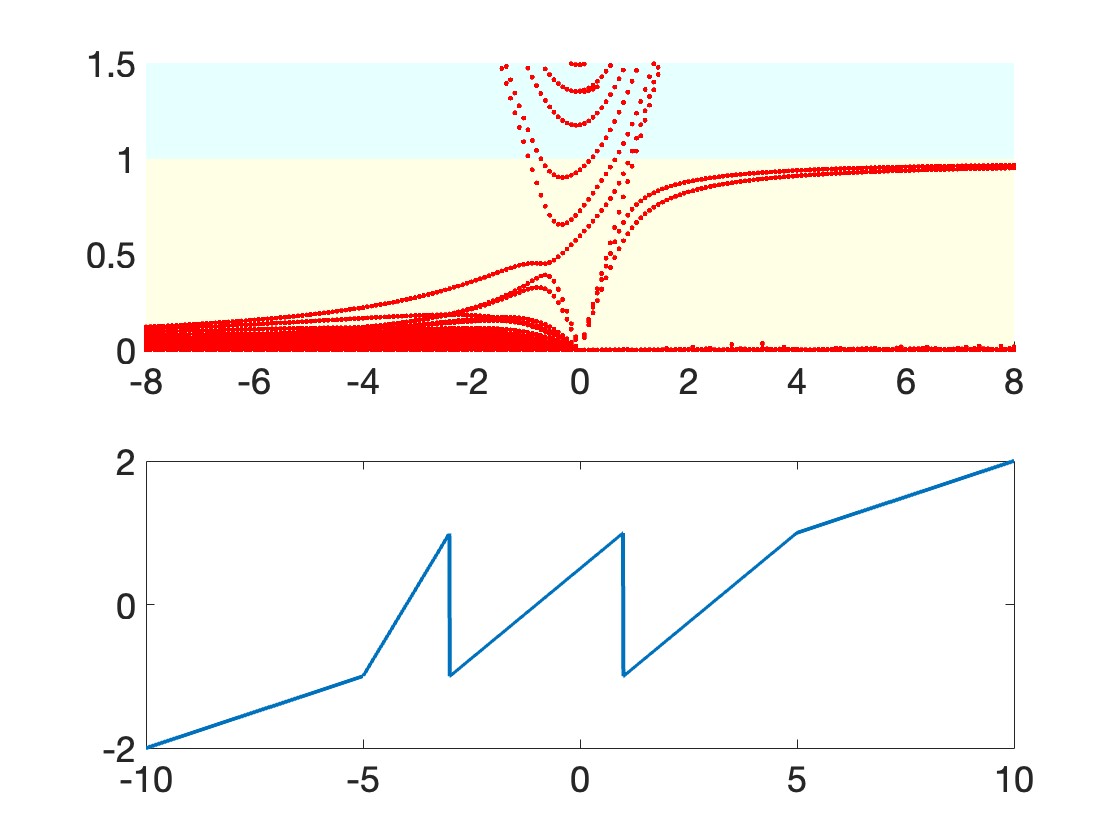}
\end{center}
\caption{Left: profile with two positive jumps of size $+2$ and one negative jump $-1$.  Right: profile with two negative jumps of size $-2$.
The filtered numerical spectra are consistent with the results of in Theorem \ref{thm:SF}.}
\label{fig:5}
\end{figure}

Two profiles involving multiple branches converging to the same value as $|\xi|\to\infty$ are presented in Fig. \ref{fig:5}. Only for frequencies $\alpha>1$ is the bulk-edge correspondence valid. We observe on the left panel a Yanai branch converging to $E=1$ as $\xi\to-\infty$ while a second branch also converges to $E=1$ in that same limit. On the right panel, we also observe two separate branches both converging to $E=1$ as $\xi\to\infty$, corresponding to a vanishing spectral flow for $0\alpha<1$ while the value of $2$ predicted by the BEC holds when $\alpha>1$.

\section*{Acknowledgment} This work was supported in part by the US National Science Foundation under grant DMS-2306411.

%%%%%%%%%%%%%%%%%
{\small
%\bibliography{../../../bibliography,../bibTI} 

\begin{thebibliography}{99}

\bibitem{avron1983homotopy}
{\sc J.~Avron, R.~Seiler, and B.~Simon}, {\em Homotopy and quantization in
  condensed matter physics}, Physical review letters, 51 (1983), p.~51.

\bibitem{B19b}
{\sc G.~Bal}, {\em Continuous bulk and interface description of topological
  insulators}, Journal of Mathematical Physics, 60 (2019), p.~081506.

\bibitem{bal2022topological}
\leavevmode\vrule height 2pt depth -1.6pt width 23pt, {\em Topological
  invariants for interface modes}, Communications in Partial Differential
  Equations, 47(8) (2022), pp.~1636--1679.

\bibitem{bal2023topological}
\leavevmode\vrule height 2pt depth -1.6pt width 23pt, {\em Topological charge
  conservation for continuous insulators}, Journal of Mathematical Physics, 64
  (2023), p.~031508.

\bibitem{bernevig2013topological}
{\sc B.~A. Bernevig and T.~L. Hughes}, {\em Topological insulators and
  topological superconductors}, Princeton university press, 2013.

\bibitem{bourne2018chern}
{\sc C.~Bourne and A.~Rennie}, {\em Chern numbers, localisation and the
  bulk-edge correspondence for continuous models of topological phases},
  Mathematical Physics, Analysis and Geometry, 21 (2018), p.~16.

\bibitem{delplace2017topological}
{\sc P.~Delplace, J.~Marston, and A.~Venaille}, {\em Topological origin of
  equatorial waves}, Science, 358 (2017), pp.~1075--1077.

\bibitem{drouot2021microlocal}
{\sc A.~Drouot}, {\em Microlocal analysis of the bulk-edge correspondence},
  Communications in Mathematical Physics, 383 (2021), p.~2069–2112.

\bibitem{drouot2020edge}
{\sc A.~Drouot and M.~Weinstein}, {\em Edge states and the valley hall effect},
  Advances in Mathematics, 368 (2020), p.~107142.

\bibitem{elbau2002equality}
{\sc P.~Elbau and G.~Graf}, {\em Equality of bulk and edge hall conductance
  revisited}, Communications in Mathematical Physics, 229 (2002), pp.~415--432.

\bibitem{fukui2012bulk}
{\sc T.~Fukui, K.~Shiozaki, T.~Fujiwara, and S.~Fujimoto}, {\em {Bulk-edge
  correspondence for Chern topological phases: A viewpoint from a generalized
  index theorem}}, Journal of the Physical Society of Japan, 81 (2012),
  p.~114602.

\bibitem{graf2021topology}
{\sc G.~M. Graf, H.~Jud, and C.~Tauber}, {\em Topology in shallow-water waves:
  a violation of bulk-edge correspondence}, Communications in Mathematical
  Physics, 383 (2021), pp.~731--761.

\bibitem{RevModPhys.82.3045}
{\sc M.~Z. Hasan and C.~L. Kane}, {\em Colloquium: Topological insulators},
  Rev. Mod. Phys., 82 (2010), pp.~3045--3067.

\bibitem{hatsugai1993chern}
{\sc Y.~Hatsugai}, {\em Chern number and edge states in the integer quantum
  hall effect}, Physical review letters, 71 (1993), p.~3697.

\bibitem{jezequel2023mode}
{\sc L.~Jezequel and P.~Delplace}, {\em Mode-shell correspondence, a unifying
  theory in topological physics--part I: Chiral number of zero-modes}, arXiv
  preprint arXiv:2310.05656,  (2023).

\bibitem{jud2024classifying}
{\sc H.~Jud and C.~Tauber}, {\em Classifying bulk-edge anomalies in the dirac
  hamiltonian}, arXiv preprint arXiv:2403.04465,  (2024).

\bibitem{kane2013topological}
{\sc C.~L. Kane}, {\em Topological band theory and the ${\mathbb z}_2$
  invariant}, in Contemporary Concepts of Condensed Matter Science, vol.~6,
  Elsevier, 2013, pp.~3--34.

\bibitem{Kato66}
{\sc T.~Kato}, {\em Perturbation theory for linear operators}, Springer-Verlag,
  Berlin, 1966.

\bibitem{matsuno1966quasi}
{\sc T.~Matsuno}, {\em Quasi-geostrophic motions in the equatorial area},
  Journal of the Meteorological Society of Japan. Ser. II, 44 (1966),
  pp.~25--43.

\bibitem{onuki2023bulk}
{\sc Y.~Onuki, A.~Venaille, and P.~Delplace}, {\em Bulk-edge correspondence
  recovered in incompressible continuous media}, arXiv preprint
  arXiv:2311.18249,  (2023).
  
\bibitem{prodan2016bulk}
{\sc E.~Prodan and H.~Schulz-Baldes}, {\em {Bulk and boundary invariants for
  complex topological insulators: From K-Theory to Physics}}, Springer Verlag,
  Berlin, 2016.

\bibitem{quinn2024approximations}
{\sc S.~Quinn and G.~Bal}, {\em {Approximations of interface topological
  invariants}}, to appear in SIAM Mathematical Analysis,  (2024).

\bibitem{quinn2024asymmetric}
\leavevmode\vrule height 2pt depth -1.6pt width 23pt, {\em Asymmetric transport
  for magnetic dirac equations}, to appear in Pure Applied Analysis,  (2024).

\bibitem{rossi2024topology}
{\sc S.~Rossi and A.~Tarantola}, {\em Topology of 2d dirac operators with
  variable mass and an application to shallow-water waves}, Journal of Physics
  A: Mathematical and Theoretical, 57 (2024), p.~065201.

\bibitem{sato2017topological}
{\sc M.~Sato and Y.~Ando}, {\em Topological superconductors: a review}, Reports
  on Progress in Physics, 80 (2017), p.~076501.

\bibitem{schulz2000simultaneous}
{\sc H.~Schulz-Baldes, J.~Kellendonk, and T.~Richter}, {\em Simultaneous
  quantization of edge and bulk hall conductivity}, Journal of Physics A:
  Mathematical and General, 33 (2000), p.~L27.

\bibitem{silveirinha2015chern}
{\sc M.~G. Silveirinha}, {\em Chern invariants for continuous media}, Physical
  Review B, 92 (2015), p.~125153.

\bibitem{souslov2019topological}
{\sc A.~Souslov, K.~Dasbiswas, M.~Fruchart, S.~Vaikuntanathan, and V.~Vitelli},
  {\em Topological waves in fluids with odd viscosity}, Physical Review
  Letters, 122 (2019), p.~128001.

\bibitem{tauber2019bulk}
{\sc C.~Tauber, P.~Delplace, and A.~Venaille}, {\em A bulk-interface
  correspondence for equatorial waves}, Journal of Fluid Mechanics, 868 (2019).

\bibitem{tauber2019anomalous}
\leavevmode\vrule height 2pt depth -1.6pt width 23pt, {\em Anomalous bulk-edge
  correspondence in continuous media}, Physical Review Research, 2 (2020),
  p.~013147.

\bibitem{tauber2023topology}
{\sc C.~Tauber and G.~C. Thiang}, {\em Topology in shallow-water waves: A
  spectral flow perspective}, Annales Henri Poincar{\'e}, 24 (2023),
  pp.~107--132.

\bibitem{teschl2009mathematical}
{\sc G.~Teschl}, {\em Mathematical methods in quantum mechanics}, Graduate
  Studies in Mathematics, 99 (2009), p.~106.

\bibitem{volovik2009universe}
{\sc G.~Volovik}, {\em The Universe in a Helium Droplet}, International Series
  of Monographs on Physics, OUP Oxford, 2009.

\bibitem{witten2016three}
{\sc E.~Witten}, {\em Three lectures on topological phases of matter}, La
  Rivista del Nuovo Cimento, 39 (2016), pp.~313--370.

\bibitem{zhu2023topology}
{\sc Z.~Zhu, C.~Li, and J.~Marston}, {\em Topology of rotating stratified
  fluids with and without background shear flow}, Physical Review Research, 5
  (2023), p.~033191.

\end{thebibliography}
%\bibliographystyle{siam}

}

\end{document}